\newtheorem{theorem}{Theorem}[section]
\newtheorem{lemma}[theorem]{Lemma}
\newtheorem{corollary}[theorem]{Corollary}
\newtheorem{example}[theorem]{Example}
\newtheorem{construction}[theorem]{Construction}
\newtheorem{remark}[theorem]{Remark}
\newcommand{\Aut}{\mathop{\mathrm{Aut}}}
\newcommand{\lcs}[2]{\gamma_{#2}(#1)}
\title{Point regular groups of automorphisms of generalised quadrangles}
\author{John Bamberg and Michael Giudici} 
\address{ Centre for the Mathematics of Symmetry and Computation\\
School of Mathematics and Statistics,
The University of Western Australia\\
35 Stirling Highway, Crawley, 6009 W.A.,
Australia.\\
John.Bamberg@uwa.edu.au, Michael.Giudici@uwa.edu.au}
\begin{document}

\begin{abstract}

We study the point regular groups of automorphisms of some of the known generalised quadrangles. In
particular we determine all point regular groups of automorphisms of the thick classical generalised
quadrangles. We also construct point regular groups of automorphisms of the generalised quadrangle
of order $(q-1,q+1)$ obtained by Payne derivation from the classical symplectic quadrangle
$\mathsf{W}(3,q)$. For $q=p^f$ with $f\geq 2$ we obtain at least two nonisomorphic groups when
$p\geq 5$ and at least three nonisomorphic groups when $p=2$ or $3$. Our groups include nonabelian
2-groups, groups of exponent 9 and nonspecial $p$-groups. We also enumerate all point regular groups
of automorphisms of some small generalised quadrangles.

MSC codes: 51E12, 20B15, 05E20
\end{abstract}

\maketitle

\section{Introduction}

In this paper we investigate the regular subgroups of some of the known generalised quadrangles.  We
demonstrate that the class of groups which can act as a point regular group of automorphisms of a
generalised quadrangle is much wilder than previously thought.

A finite generalised quadrangle $\mathcal{Q}$ is a geometry consisting of a finite set of points and
lines such that, if $P$ is a point and $\ell$ is a line not on $P$, then there is a unique line
through $P$ which meets $\ell$ in a point.  From this property, if there are at least three points
of $\mathcal{Q}$ or there is a point on at least three lines, then one can see that there are
constants $s$ and $t$ such that each line is incident with $s+1$ points, and each point is incident
with $t+1$ lines. Such a generalised quadrangle is said to have \emph{order} $(s,t)$, and hence its
point-line dual is a generalised quadrangle of order $(t,s)$. The generalised quadrangle is said to
be \emph{thick} if $s,t\geqslant 2$.

A permutation group $G$ on a set $\Omega$ acts \textit{regularly} on $\Omega$ if it acts
transitively on $\Omega$ and only the identity of $G$ fixes an element of $\Omega$.  Ghinelli proves
in \cite{Ghinelli} that a Frobenius group or a group with a nontrivial centre cannot act regularly
on the points of a generalised quadrangle of order $(s,s)$, where $s$ is even.  S. De Winter and
K. Thas \cite{abelian} prove that if a finite thick generalised quadrangle admits an abelian group
of automorphisms acting regularly on its points, then it is the Payne derivation of a translation
generalised quadrangle of even order. Yoshiara \cite{yoshiara} proved that there are no generalised
quadrangles of order $(s^2 , s)$ admitting an automorphism group acting regularly on points.

Our first result is a complete classification of all regular subgroups of the thick classical generalised quadrangles. 

\begin{theorem}\label{MainTheorem}
Let $\mathcal{Q}$ be a finite thick classical generalised quadrangle and let $G$ be a group of
automorphisms that acts regularly on the points of $\mathcal{Q}$.  Then one of the following holds:
\begin{enumerate}
\item $\mathcal{Q}=\mathsf{Q}^-(5,2)$ and $G$ is an extraspecial group of order 27 and exponent 3.
\item $\mathcal{Q}=\mathsf{Q}^-(5,2)$ and $G$ is an extraspecial group of order 27 and exponent 9.
\item $\mathcal{Q}=\mathsf{Q}^-(5,8)$ and $G\cong \mathrm{GU}(1,2^9).9\cong C_{513}\rtimes C_9$.
\end{enumerate}
\end{theorem}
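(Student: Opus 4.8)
The plan is to treat the five isomorphism classes of thick classical generalised quadrangle separately, using in each case that $\Aut(\mathcal{Q})$ is a known almost simple group acting on the ambient projective space. Writing $N=(s+1)(st+1)$ for the number of points, a point regular $G$ is precisely a subgroup of $\Aut(\mathcal{Q})$ of order $N$ with trivial point stabilisers. The dual pair $\mathsf{H}(3,q^2)$, of order $(q^2,q)$, is removed immediately by Yoshiara's theorem, so it remains to handle $\mathsf{W}(3,q)$ and its dual $\mathsf{Q}(4,q)$ (order $(q,q)$, $N=(q+1)(q^2+1)$), the Hermitian quadrangle $\mathsf{H}(4,q^2)$ (order $(q^2,q^3)$, $N=(q^2+1)(q^5+1)$), and the elliptic quadrangle $\mathsf{Q}^-(5,q)$ (order $(q,q^2)$, $N=(q+1)^2(q^2-q+1)$). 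Before the case analysis I would record that, by De Winter--Thas, an abelian regular group forces $\mathcal{Q}$ to be a Payne derivation of a translation generalised quadrangle of even order; solving the order constraint shows that among the thick classical quadrangles only $\mathsf{Q}^-(5,2)$ can meet this condition, so for every other case any regular $G$ is automatically nonabelian.

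The engine of the argument is localisation via primitive prime divisors. For each family I would fix a Zsigmondy prime $r$ dividing the ``large'' cyclotomic factor of $N$ (namely $q^2+1$ for $\mathsf{W}(3,q)$, $q^5+1$ for $\mathsf{H}(4,q^2)$, and $q^2-q+1$ for $\mathsf{Q}^-(5,q)$) and invoke the classification of subgroups of classical groups containing such an element to show that a Sylow $r$-subgroup of $G$ lies inside a single maximal Singer-type torus, whose normaliser is that torus extended by a cyclic group of field and Galois automorphisms. This pins down where the regular group must live and reduces matters to an arithmetic comparison between $N$ and the orders of these torus normalisers.

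For $\mathsf{W}(3,q)$, $\mathsf{Q}(4,q)$ and $\mathsf{H}(4,q^2)$ these comparisons fail: the relevant torus has order a proper divisor of $N$, and its normaliser is too small for the remaining factor to be accounted for by an automorphism group acting without fixed points, so no regular $G$ of order $N$ exists and these families are excluded (with the finitely many small $q$ checked directly). The elliptic quadrangle is exceptional because $N=(q^3+1)(q+1)$ is exactly the order of the maximal torus $\mathrm{GU}(1,q^3)\times\mathrm{GU}(1,q)$ of $\mathrm{GU}(4,q)$, arising from a decomposition $V=V_3\perp V_1$ of the Hermitian space, transported through the exceptional isomorphism $\mathrm{PSU}(4,q)\cong\mathrm{P\Omega}^-(6,q)$. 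Here I would compute the image of this torus and of its normaliser in $\mathrm{P\Gamma O}^-(6,q)$, determine its action on the points of $\mathsf{Q}^-(5,q)$, and decide when a subgroup of order $N$ is regular. The abelian torus itself is excluded for $q>2$ by the De Winter--Thas remark, so one seeks a nonabelian extension of the Singer cycle $\mathrm{GU}(1,q^3)\cong C_{q^3+1}$ by field automorphisms acting fixed-point-freely, and tracking the order and regularity conditions for such an extension should force $q\in\{2,8\}$, giving $C_{513}\rtimes C_9\cong\mathrm{GU}(1,2^9).9$ when $q=8$ and a group of order $27$ when $q=2$.

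The main obstacle is this fine analysis for $\mathsf{Q}^-(5,q)$: passing correctly between $\mathrm{P\Gamma U}(4,q)$ and $\mathrm{P\Gamma O}^-(6,q)$, computing the induced action of field automorphisms on the Singer torus modulo the centre, and verifying fixed-point-freeness on points rather than merely matching orders. I expect the genuine restriction to $q=8$ to emerge from this step, not from crude divisibility. The case $q=2$ I would settle by a direct (or computational) enumeration of the regular subgroups of order $27$ inside $\Aut(\mathsf{Q}^-(5,2))$, separating the two extraspecial groups of order $27$ by their exponents $3$ and $9$ and confirming that no abelian regular group occurs despite $\mathsf{Q}^-(5,2)$ being a Payne derivation of even order.
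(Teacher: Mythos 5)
Your route is genuinely different from the paper's. The paper's proof is essentially a citation: the authors observe that $\Aut(\mathcal{Q})$ acts primitively on points as an almost simple classical group, and then read the answer off from the Liebeck--Praeger--Saxl memoir classifying regular subgroups of primitive almost simple groups (with Baumeister's papers as an independent source for the unitary actions). Your proposal instead reproves the relevant cases of that classification by hand: Yoshiara to remove $\mathsf{H}(3,q^2)$, De Winter--Thas to remove abelian candidates, then Zsigmondy primes and subgroup structure for the remaining families. This is close in spirit to how such classifications are actually obtained, and it matches the paper's own remark that for low-dimensional classical groups one can avoid CFSG by using the classical subgroup lists (Mitchell, Flesner, Mwene, etc.). What the paper's approach buys is brevity and certainty; what yours would buy is a self-contained, CFSG-light argument.

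However, as written there is a genuine gap at the localisation step. From the fact that a Sylow $r$-subgroup of $G$ lies in a Singer-type torus you cannot conclude that $G$ lies in the normaliser of that torus, and the classification you invoke does not say this: it returns a longer list of overgroups of a ppd element, including reducible subgroups whose classical core still contains ppd elements, field-extension subgroups, Suzuki groups for $q$ even in $\mathrm{Sp}(4,q)$, and exceptional nearly simple subgroups such as $A_7$, $\mathrm{PSL}(2,7)$, $\mathrm{PSL}(2,11)$. This matters concretely: in $\mathrm{Sp}(4,q)$ the field-extension subgroup $\mathrm{Sp}(2,q^2).2$ has order $2q^2(q^4-1)$, which \emph{is} divisible by $N=(q+1)(q^2+1)$, so your arithmetic comparison against the torus normaliser alone does not exclude it; one must recurse into it (via Dickson's classification of subgroups of $\mathrm{PSL}(2,q^2)$) to finish that family. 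Likewise, in the elliptic case you must first rule out that the projection of $G$ to the $\mathrm{PGU}(3,q)$ factor contains $\mathrm{SU}(3,q)$ or lies in an exceptional subgroup (Mitchell/Hartley) before you may place it in the Singer normaliser. These recursions are routine order and divisibility arguments, so the gap is fillable, but it is where most of the real work lies and the proposal passes over it. Two smaller corrections: for $q=2$ there is no primitive prime divisor of $q^6-1$ (the Zsigmondy exception), so your separate direct treatment of $\mathsf{Q}^-(5,2)$ is not a convenience but forced; and $\mathsf{Q}^-(5,2)$ is the Payne derivation of $\mathsf{W}(3,3)$, a quadrangle of \emph{odd} order $3$, so De Winter--Thas already rules out abelian regular groups there --- your phrase ``Payne derivation of even order'' has this backwards, though it does not affect the argument since you verify nonabelianness directly.
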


An alternative approach to the classification in Theorem \ref{MainTheorem} was independently
undertaken in \cite{ow}.

Most of the known generalised quadrangles are \textit{elation generalised quadrangles}, and such a
generalised quadrangle $\mathcal{Q}$ of order $(s,t)$ has a group of automorphisms $G$ which fixes a
point $x$ and each line on $x$, and acts regularly on the points not collinear with $x$.  We call
$G$ an \textit{elation group} and $x$ a \textit{base point} of $\mathcal{Q}$. Necessarily, $G$ has
order $s^2t$. The only known generalised quadrangles which are not elation generalised quadrangles
are the \textit{Payne derived} quadrangles and their duals.

Payne \cite{payne1} gave a method for constructing a new generalised quadrangle from an old
one. Take a generalised quadrangle $\mathcal{Q}$ of order $(s,s)$ and suppose it has a point $x$
such that for every point $y$ not collinear with $x$, the set of points $\{x,y\}^{\perp\perp}$ has
size $s+1$, where we use the notation $S^\perp$ to denote the set of all points collinear with every
element of the set $S$.  A new generalised quadrangle $\mathcal{Q}^x$ can be constructed whose
points are the points of $\mathcal{Q}$ not collinear with $x$ and the lines of $\mathcal{Q}^x$ are:
(i) the lines of $\mathcal{Q}$ not incident with $x$, and (ii) the \textit{hyperbolic lines}
$\{x,y\}^{\perp\perp}$ where $y$ is not collinear with $x$. Thus $\mathcal{Q}^x$ is a generalised
quadrangle of order $(s-1,s+1)$.  If we take $\mathcal{Q}$ to be the elation generalised quadrangle
$\mathsf{W}(3,q)$, then any point $x$ will give rise to a Payne derived quadrangle $\mathcal{Q}^x$
of order $(q-1,q+1)$ and if $G$ is the elation group of the classical symplectic quadrangle
$\mathsf{W}(3,q)$ about the point $x$, then $G$ is elementary abelian for $q$ even and a Heisenberg
group for $q$ odd \cite{KantorSCABSGABS}.  The stabiliser $H$ of the point $x$ in the full
automorphism group of $\mathcal{Q}$ acts as a group of automorphisms of $\mathcal{Q}^x$ and contains
$G$ as a normal subgroup. In fact for $q\geq 5$, $H$ is the full automorphism group of
$\mathcal{Q}^x$ \cite{GJS}.  However, the full automorphism group of $\mathcal{Q}^x$ may contain
point regular subgroups other than $G$. In Section \ref{sec:paynederived} we exhibit several other
infinite families of regular subgroups and the results are summarised in the following theorem.

\begin{theorem}
\label{thm:secondthm}
Let $\mathcal{Q}^x$ be the generalised quadrangle of order $(q-1,q+1)$ obtained by Payne derivation
from $\mathsf{W}(3,q)$. Then there exist distinct subgroups $E$ and $P$ of $\Aut(\mathcal{Q}^x)$
that act regularly on the points of $\mathcal{Q}^x$ and for $q$ not a prime there also exists a
further regular subgroup $S$ such that $E$, $P$ and $S$ have the following properties:
\begin{enumerate}
  \item $E$ is an elation group of $\mathsf{W}(3,q)$ while $P$ and $S$ are not;
 \item $E\not\cong S$ and $P\not\cong S$;
 \item $E\cong P$ if and only if $q$ is not a power of $2$ or $3$;
 \item For $q$ even,
  \begin{enumerate}
     \item $E$ is elementary abelian while $S$ and $P$ have exponent 4 and are nonabelian
   except when $q=2$;
   \item $P'<Z(P)$ and $S'<Z(S)$ (in particular, $P$ and $S$ are not special).
  \end{enumerate}
\item For $q=3^f$, $E$ has exponent $3$ while $P$ and $S$ have exponent $9$;
\item For $q$ odd, $Z(P)=P'=Z(E)=E'$ and $Z(S)<S'$ (in particular, $P$ and $E$ are special while $S$
  is not);

\end{enumerate}
\end{theorem}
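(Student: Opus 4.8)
The plan is to first construct all three groups explicitly as subgroups of the automorphism group of $\mathsf{W}(3,q)$ stabilising the base point $x$. Since the Payne derived quadrangle $\mathcal{Q}^x$ has as its points the points of $\mathsf{W}(3,q)$ not collinear with $x$, any group acting regularly on these must have order $q^2(q+1)/(q+1)\cdot\ldots$—more precisely, $\mathcal{Q}^x$ has order $(q-1,q+1)$, so it has $(s+1)(st+1)=q(q^2-1)\cdot\ldots$ points; I would compute that a point regular group has order $q^3-q$. Wait—let me recompute: the number of points of a generalised quadrangle of order $(s,t)$ is $(s+1)(st+1)$, which for $(q-1,q+1)$ gives $q\cdot((q-1)(q+1)+1)=q\cdot q^2=q^3$. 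So each regular group has order $q^3$. The elation group $E$ of $\mathsf{W}(3,q)$ about $x$ is the natural candidate, and I would take it directly from \cite{KantorSCABSGABS}: elementary abelian for $q$ even, Heisenberg for $q$ odd. For $P$ and $S$ I would look inside the stabiliser $H$ of $x$, which contains $E$ normally, and write down explicit alternative regular subgroups using coordinates on $\mathsf{W}(3,q)$, choosing $P$ and $S$ so that they mix the unipotent part of $H$ with a suitable torus or field-automorphism element (this is where $q$ not prime is used for $S$, since field automorphisms are needed).

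Once the three groups are written in coordinates, I would verify regularity on $\mathcal{Q}^x$ for each: transitivity on the $q^3$ points together with a trivial point stabiliser. The cleanest route is to check that each group has the right order $q^3$ and acts semiregularly (trivial stabiliser), since semiregularity plus the correct order forces transitivity and hence regularity. This reduces the geometric claim to an algebraic fixed-point computation within $H$.

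The remaining parts (1)--(6) are then a sequence of group-theoretic computations on the three explicitly presented groups. I would organise them by the parity and prime-power structure of $q$. For the abstract isomorphism statements in (2) and (3), I would compare invariants: exponent, the order of the derived subgroup, and the center. Specifically, (3) hinges on the exponent computation—$E$ is elementary abelian (exponent $p$) for $q$ even and exponent $3$ for $q=3^f$, whereas $P$ has exponent $4$ for $q$ even and exponent $9$ for $q=3^f$; so $E\cong P$ fails exactly when $p\in\{2,3\}$, which is the content of (3), (4a), and (5). For the center and derived-subgroup comparisons in (4b) and (6) I would compute $P'$, $Z(P)$, $S'$, $Z(S)$ directly from the commutator relations of the explicit presentations, verifying $P'<Z(P)$ and $Z(S)<S'$ as claimed, and contrasting with the special-group equalities $Z(E)=E'$.

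The main obstacle I expect is twofold. First, finding the correct explicit coordinatisation of the alternative regular subgroups $P$ and $S$: the elation group $E$ is standard, but producing genuinely nonisomorphic regular subgroups requires a clever choice of generators blending the unipotent radical with semisimple or field-automorphism elements, and then proving these still act regularly is delicate because regularity on $\mathcal{Q}^x$ is more subtle than on $\mathsf{W}(3,q)$—one must use the hyperbolic-line structure of the derivation. Second, the commutator bookkeeping needed to separate $P$ from $E$ and $S$ from both (establishing the strict inclusions $P'<Z(P)$ and $Z(S)<S'$) must be done uniformly in $q$ across the different residue cases for $p$, and keeping the computations honest while showing the inclusions are strict (not equalities) is where the real work lies.
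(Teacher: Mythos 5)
Your high-level framework is sound: the point count $(s+1)(st+1)=q\cdot q^2=q^3$ is correct, semiregularity plus order $q^3$ does force regularity (and this is exactly how the paper argues), and invariant comparison is how the paper separates the groups. But the heart of the theorem is the actual construction of $P$ and $S$, and here your proposal would fail. A point regular group has order $q^3=p^{3f}$ and hence is a $p$-group, while nontrivial torus (semisimple) elements in characteristic $p$ have order coprime to $p$; so ``mixing the unipotent part of $H$ with a suitable torus'' is impossible. Field automorphisms are likewise not the needed ingredient: in the paper both $P$ and $S$ are purely unipotent subgroups of $\mathrm{Sp}(4,q)_{\mathbf{x}}$. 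Concretely, with $E=\{t_{a,b,c}\}$ the elation group and $R=\{t_{a,b,0}\}\leqslant E$ elementary abelian of order $q^2$, the new ingredient is a family of unipotent elements $\theta_\alpha\in\mathrm{Sp}(4,q)_{\mathbf{x}}\setminus E$ acting nontrivially on $\mathbf{x}^{\perp}/\mathbf{x}$; then $P=\langle R,\theta_{\alpha_1},\dots,\theta_{\alpha_f}\rangle$ for a $\mathrm{GF}(p)$-basis $\{\alpha_i\}$ of $\mathrm{GF}(q)$, and $S=S_{U,W}=\langle R,\theta_{\alpha_1},\dots,\theta_{\alpha_k},t_{0,0,w}\mid w\in W\rangle$ for a decomposition $\mathrm{GF}(q)=U\oplus W$ into $\mathrm{GF}(p)$-subspaces. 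The hypothesis that $q$ is not prime enters only because a nontrivial such decomposition requires $f\geq 2$ --- no field automorphisms appear at all. Moreover, the fact that each $\theta_\alpha$ induces $\left(\begin{smallmatrix}1&0\\ \alpha&1\end{smallmatrix}\right)$ on $\mathbf{x}^{\perp}/\mathbf{x}$, so that $P$ and $S$ fix only one line through $\mathbf{x}$, is precisely what proves part (1), that $P$ and $S$ are not elation groups --- a claim your proposal never addresses.

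Two further gaps. First, comparing invariants can only prove non-isomorphism, so your plan covers the ``only if'' direction of (3) but not the ``if'': for $p>3$ one must exhibit an isomorphism $E\cong P$, which the paper does explicitly by sending $t_{a,b,0}\mapsto t_{a,b,0}$ and $\theta_{\alpha_i}\mapsto t_{0,-\alpha_i^2/2,\alpha_i}$ and checking that the defining commutator relations match. (The exponent facts you invoke are themselves not free: they rest on the lemma that a Sylow $p$-subgroup of $\mathrm{GL}(4,q)$ has exponent $p$ for $p>3$ and $p^2$ for $p\in\{2,3\}$, together with the computation that $\theta_\alpha$ has order $p^2$ exactly when $p\in\{2,3\}$; the paper also needs a separate field-theoretic lemma on $\{\alpha\beta(\alpha+\beta)\}$ to pin down $P'$ and $S'$ in characteristic $2$, and a Magma check for $q=4$.) Second, your anticipated obstacle that regularity on $\mathcal{Q}^x$ is ``more subtle'' because of the hyperbolic lines is misplaced: the points of $\mathcal{Q}^x$ are exactly the points of $\mathsf{W}(3,q)$ not collinear with $\mathbf{x}$, so regularity is checked entirely inside $\mathsf{W}(3,q)$; the hyperbolic lines matter only for the standard fact that the stabiliser of $\mathbf{x}$ induces automorphisms of $\mathcal{Q}^x$.
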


More explicit details and constructions are given in Section \ref{sec:paynederived}.  In particular,
we construct more regular subgroups than those described in Theorem \ref{thm:secondthm}; see Remark
\ref{rem:moreregularsubs}.  The generalised quadrangle of order $(2,4)$ obtained by Payne derivation from $\mathsf{W}(3,3)$ is isomorphic to $\mathsf{Q}^-(5,2)$ \cite[\S 6.1]{paynethas}. The regular groups $E$ and $P$ occurring in Theorem \ref{thm:secondthm} for this case are the two regular subgroups which appear in Theorem \ref{MainTheorem}.

The reader may notice that the existence of a regular group of
automorphisms implies that the point graph is a Cayley graph with the same automorphism group as the generalised quadrangle. Moreover, since $E$ is normal in the
full automorphism group of the Cayley graph, they are \textit{normal} Cayley graphs for $E$. However
$P$ is not normal in the full automorphism group of the Cayley graph, and so when $q$ is not a power
of $2$ or $3$, the point graph is a normal and non-normal Cayley graph for two isomorphic
groups. That this is possible answers a question posed to the authors by Yan-Quan Feng and Ted
Dobson. The only previous instance of such a phenomenon in the literature known to the authors was a
single example studied by Royle \cite{Royle}.

In Section \ref{sec:smallGQ} we list all regular subgroups of the small Payne derived generalised
quadrangles. For $q$ not a prime there are many more regular subgroups than just the groups $E,P$
and $S$ exhibited in Theorem \ref{thm:secondthm}. In Section \ref{sec:conjectures} we give an
account of how our results relate to previous results and conjectures in the literature.

\subsection*{Group theoretical terminology}

Though our group theoretic notation is standard, we briefly review it for the sake of a reader whose
interest lies more in geometry than in group theory.  We denote a cyclic group of order $n$ by
$C_n$.  If $g$ and $h$ are group elements, then we define their {\em commutator} as
$[g,h]=g^{-1}h^{-1}gh$. The \emph{centre} of a group $G$ consists of those elements $z\in G$ that
satisfy $[g,z]=1$ for all $g\in G$ and is usually denoted by $Z(G)$. For an element $g\in G$, the
\emph{centraliser} of $g$ in $G$ is the set of all elements $z\in G$ such that $[g,z]=1$ and is
denoted by $C_G(g)$. If $H,\ K$ are subgroups of a group $G$, then the \emph{commutator subgroup}
$[H,K]$ is generated by all commutators $[h,k]$ where $h\in H$ and $k\in K$. The \emph{derived
  subgroup} $G'$ of $G$ is defined as $[G,G]$ and is the smallest normal subgroup $N$ such that
$G/N$ is abelian.  The symbol $\lcs Gi$ denotes the $i$-th term of the \emph{lower central series}
of $G$; that is $\lcs G1=G$, $\lcs G2=G'$, and, for $i\ge 3$, $\lcs G{i+1}=[\lcs Gi,G]$.  The
\emph{nilpotency class} of a $p$-group is the smallest $c$ such that $\lcs G{c+1}=1$.  The
\emph{Frattini subgroup} $\Phi(G)$ of a finite group $G$ is the intersection of all the maximal
subgroups. If $G$ is a finite $p$-group, then $\Phi(G)=G'G^p$ where $G^p$ is the subgroup of $G$
generated by the $p$-th powers of all elements of $G$. In particular, $\Phi(G)$ is the smallest
normal subgroup $N$ such that $G/N$ is elementary abelian. The \emph{exponent} of a finite group $G$
is the smallest positive $n$ such that $g^n=1$ for all $g\in G$.

A $p$-group $P$ is called \emph{special} if $Z(P)=P'=\Phi(P)$. It is called \emph{extraspecial} if
it is special and these three subgroups all have order $p$. Extraspecial groups have order
$p^{1+2n}$ for some positive integer $n$ and there are two extraspecial groups of each order. When
$p$ is odd they are distinguished by their exponent: one has exponent $p$ and the other has exponent
$p^2$.  One class of special $p$-groups are the (3-dimensional) \emph{Heisenberg groups}. These are
the $p$-groups which are isomorphic to a Sylow $p$-subgroup of $\mathrm{GL}(3,q)$ for $q=p^f$, that
is, the group of lower triangular matrices with all entries on the diagonal equal to $1$. When $q$
is odd, the Heisenberg groups have exponent $p$ and for $q=p$ are extraspecial.

\section{The classical case}

The \textit{classical} generalised quadrangles are rank 2 polar spaces, whereby the points and lines
are the singular one-dimensional and two-dimensional subspaces (resp.) of a vector space equipped
with a quadratic or sesquilinear form. Below is a table listing the thick classical generalised
quadrangles together with their orders and automorphism groups:

\begin{center}
\begin{tabular}{l|l|l}
Generalised quadrangle&Order&Aut. Group\\
\hline
$\mathsf{H}(3,q^2)$ & $(q^2,q)$ & $\mathrm{P\Gamma U}(4,q)$\\
$\mathsf{H}(4,q^2)$ & $(q^2,q^3)$ & $\mathrm{P\Gamma U}(5,q)$\\
$\mathsf{W}(3,q)$ & $(q,q)$ & $\mathrm{P\Gamma Sp}(4,q)$\\
$\mathsf{Q}(4,q)$ & $(q,q)$ & $\mathrm{P\Gamma O}(5,q)$\\
$\mathsf{Q}^-(5,q)$ & $(q,q^2)$ & $\mathrm{P\Gamma O}^-(6,q)$
\end{tabular}
\end{center}

It is well-known that the first and last examples above are dual pairs and the third and fourth
examples are also dual.  In a generalised quadrangle of order $(s,t)$, a simple counting argument
shows that the number of points is $(s+1)(st+1)$ and the number of lines is $(t+1)(st+1)$.
Now we give a short proof of Theorem \ref{MainTheorem}.

\begin{proof}[Proof of Theorem \ref{MainTheorem}]
The automorphism group of a classical generalised quadrangle acts primitively on the points of the
generalised quadrangle (as it is the natural action of the classical group), and by hypothesis, it
contains a regular subgroup $G$. The classification of all regular subgroups of almost simple
primitive groups was established in the monograph \cite{regsubs} of Liebeck, Praeger and Saxl, from
which the examples in the above table are precisely the examples that arise in our context. The
groups of concern to us are dealt with in \cite[Chapters 6 and 10]{regsubs}. The regular subgroups
of $\mathrm{P\Gamma U}(n,q)$ in its action on totally isotropic $i$-spaces and some other actions of
classical groups were independently determined by Baumeister \cite{baumeister05, baumeister}. The
complete results of \cite{regsubs} require the Classification of Finite Simple Groups. However, for
an individual family of groups it only requires precise information about the subgroup structure,
and for low-dimensional classical groups this can be obtained without the Classification, for
example \cite{dimartinowag,flesner,KantLieb,mitch,mwene,wagner,zalesski}.
\end{proof}

Some other interesting consequences can be read off from the results of \cite{regsubs}, namely: (i)
only metacyclic groups can act regularly on the points of a Desarguesian projective plane (c.f.,
\cite[Chapter 5]{regsubs}); (ii) the classical generalised hexagons and octagons do not have a group
of automorphisms which act regularly on points (c.f., \cite[Chapter 12]{regsubs} and \cite[Theorem
  3]{baumeister}).

The examples below can also be found in \cite[\S10.1]{baumeister}.

\subsection*{First example: $\mathcal{Q}=\mathsf{Q}^-(5,2)$}

The regular group $G$ arising here is an extraspecial group of order 27 and exponent 3. Let $A\in
O^-(2,2)$ be of order 3. Then
$$G=\left\langle 
\begin{pmatrix} 
A&0&0\\0&A^{-1}&0\\0&0&I
\end{pmatrix},
\begin{pmatrix} 
I&0&0\\0&A&0\\0&0&A^{-1}
\end{pmatrix}, 
\begin{pmatrix}
0&I&0\\0&0&I\\I&0&0
\end{pmatrix}\right\rangle$$ 
where $G$ preserves an orthogonal decomposition of the 6-dimensional vector space into 3 anisotropic
lines. If $x$ is a nontrivial element of $G$ with 1 as an eigenvalue then 1 has multiplicity 2 and
the fixed-point space of $x$ is an anisotropic line. Thus the only nontrivial element of $G$ which
fixes a singular vector is the identity. Since the order of $G$ is equal to the number of points
(i.e., 27), $G$ is regular.

\subsection*{Second example: $\mathcal{Q}=\mathsf{Q}^-(5,2)$}

Here $G$ is an extraspecial group of order 27 and exponent 9. Let $A\in O^-(2,2)$ be of order 3. Then 
$$G=\left\langle 
\begin{pmatrix} 
0&A&0\\0&0&I\\I&0&0
\end{pmatrix},
\begin{pmatrix} 
0&I&0\\0&0&A\\I&0&0
\end{pmatrix} \right\rangle$$ 
where again $G$  preserves an orthogonal decomposition of the 6-dimensional vector space into 3 anisotropic
lines. The elements of order 9 act irreducibly on the vector space while the elements of order 3 are of the form
$$\begin{pmatrix} 
A^i&0&0\\0&A^j&0\\0&0&A^k
\end{pmatrix}$$ 
where $A^{i+j+k}=I$ with $i,j,k\in\mathbb{Z}_3$ and at least two nonzero. The fixed-point spaces of
such elements are anisotropic lines. Thus the only nontrivial element of $G$ which fixes a singular
vector is the identity. Since $|G|=27$ it follows that $G$ is regular.

\subsection*{Third example: $\mathcal{Q}=\mathsf{Q}^-(5,8)$}

Here $G\cong \mathrm{GU}(1,2^9).9\cong C_{513}\rtimes C_9$.

The number of points of $\mathcal{Q}=\mathsf{Q}^-(5,8)$ is $(8+1)(8^3+1)$, and this value is
divisible by a primitive prime divisor $19$ of $8^6-1$. The normaliser of the Sylow $19$-subgroup of
$\mathrm{P\Gamma O}^-(6,q)$ contains (and is in fact equal to) $(\mathrm{GU}(1,2^9).9):C_2$, where
the involution on top is a field automorphism. This is a typical example of a maximal subgroup of
$\mathrm{P\Gamma O}^-(6,q)$ in the ``extension field groups'' Aschbacher class.  Now the subgroup
$\mathrm{GU}(1,2^9).9$ is irreducible and has order $(8+1)(8^3+1)$. The normal subgroup
$\mathrm{GU}(1,2^9)$ is the centraliser of the Sylow $19$-subgroup and hence acts semi-regularly,
whereas the $9$ on top is an automorphism of the field extension, which ensures that
$\mathrm{GU}(1,2^9).9$ also acts semi-regularly. Therefore $\mathrm{GU}(1,2^9).9$ acts regularly on
the points of $\mathsf{Q}^-(5,8)$.

\section{Payne derived generalised quadrangles}
\label{sec:paynederived}

Let $q=p^f$ for some prime $p$ and consider the generalised quadrangle
$\mathcal{Q}=\mathsf{W}(3,q)$. Let $\textbf{x}$ be a point of $\mathcal{Q}$. As outlined in the
introduction, we can construct a new generalised quadrangle $\mathcal{Q}^x$ whose points are the
points of $\mathsf{W}(3,q)$ which are not incident with $\textbf{x}$ and whose lines of
$\mathsf{W}(3,q)$ not incident with $\textbf{x}$ together with the hyperbolic lines containing
$\textbf{x}$ but not in $\textbf{x}^{\perp}$. This generalised quadrangle is referred to as a
\emph{Payne derived quadrangle} and has order $(q-1,q+1)$.  The automorphism group of
$\mathcal{Q}^x$ contains the stabiliser of $\textbf{x}$ in $\mathrm{P}\Gamma\mathrm{Sp}(4,q)$. When
$q\geq 5$ this is the full automorphism group of $\mathcal{Q}^x$ \cite[(2.4) Corollary]{GJS}.

We will use the following setup. Let $V$ be a $4$-dimensional vector space over $\mathrm{GF}(q)$,
and consider the following alternating form on $V$:
$$\beta(x,y) := x_1y_4-y_1x_4+x_2y_3-y_2x_3.$$ The totally isotropic subspaces yield the points and
lines of the generalised quadrangle $\mathsf{W}(3,q)$, with isometry group $\mathrm{Sp}(4,q)$. Let
$\textbf{x}:=\langle (1,0,0,0)\rangle$. Then
$$\mathrm{Sp}(4,q)_{\textbf{x}}=\left\{ \begin{pmatrix}
                         \lambda & 0& 0\\
                          \textbf{u}^T   & A & 0\\
                         z & \textbf{v} & \lambda^{-1}
                     \end{pmatrix} 
              \Big\vert\,\, A\in\mathrm{GL}(2,q),\textbf{u},\textbf{v}\in \mathrm{GF}(q)^2,z\in\mathrm{GF}(q), AJA^T=J, \textbf{u}=\lambda \textbf{v} JA^T,\lambda\in\mathrm{GF}(q)\right\}$$
where $J:=\left(\begin{smallmatrix} 0&1\\-1&0 \end{smallmatrix}\right)$. 

Of particular importance is the subgroup
\begin{equation}\label{eqE}
E:=\left\{ \begin{pmatrix}
                         1 & 0& 0& 0\\
                        -c & 1& 0& 0\\
                         b & 0& 1& 0\\
                         a & b& c& 1
                     \end{pmatrix} 
              \Big\vert\,\, a,b,c\in\mathrm{GF}(q)\right\}\vartriangleleft \mathrm{Sp}(4,q)_{\textbf{x}}
\end{equation}
which has order $q^3$. Let $t_{a,b,c}$ be the element of $E$ defined by 
$$t_{a,b,c}:=\begin{pmatrix}
                         1   & 0& 0& 0\\
                        -c & 1& 0& 0\\
                         b & 0& 1& 0\\
                         a & b& c& 1
                     \end{pmatrix}.$$ 
Then a simple calculation shows that  $$  t_{a,b,c}t_{x,y,z} =t_{a+x-bz+cy,b+y,c+z} $$
for any $(a,b,c)$ and $(x,y,z)$. In particular $(t_{a,b,c})^{-1}=t_{-a,-b,-c}$ and 
\begin{equation}
 \label{eq:tcomms}
[t_{a,b,c},t_{x,y,z}]=t_{-a-x-bz+cy,-b-y,-c-z}t_{a+x-bz+cy,b+y,c+z}=t_{-2bz+2cy,0,0}
\end{equation}

We record some properties of $E$ in the following lemma.

\begin{lemma}
\label{lem:E}
Let $E$ be the group defined in (\ref{eqE}).
 \begin{enumerate}
  \item $E$ has exponent $p$.
  \item For $q$ even, $E$ is an elementary abelian $2$-group.
  \item For $q$ odd, $Z(E)=E'=\Phi(E)=\{t_{a,0,0}\mid a\in\mathrm{GF}(q)\}$.
  \item For $q$ odd, $E$ is a special group, and for $q=p$, $E$ is extraspecial of exponent $p$.
 \end{enumerate}
\end{lemma}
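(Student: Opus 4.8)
The plan is to prove all four parts directly from the multiplication formula
$$t_{a,b,c}t_{x,y,z}=t_{a+x-bz+cy,\,b+y,\,c+z}$$
and the commutator formula \eqref{eq:tcomms}, which already give everything needed. First I would handle the exponent. Iterating the product formula, the $n$-th power of $t_{a,b,c}$ has the form $t_{na+\binom{n}{2}(c\cdot b - b\cdot c),\,nb,\,nc}$; the cross-terms telescope because at the $k$-th step the accumulated middle/last coordinates are $(kb,kc)$, contributing $-(kb)c+(kc)b=0$ to the first coordinate. Hence $(t_{a,b,c})^n=t_{na,nb,nc}$, so $(t_{a,b,c})^p=t_{pa,pb,pc}=t_{0,0,0}=1$ in characteristic $p$, and there exist elements (any with $b\neq 0$ or $c\neq 0$) of order exactly $p$. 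This proves (1). Part (2) is then immediate: for $q$ even, $p=2$ and \eqref{eq:tcomms} gives $[t_{a,b,c},t_{x,y,z}]=t_{-2bz+2cy,0,0}=t_{0,0,0}=1$, so $E$ is abelian, and by (1) it has exponent $2$, hence is elementary abelian.

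For (3) and (4), assume $q$ odd. Let $Z:=\{t_{a,0,0}\mid a\in\GF(q)\}$. I would first show $E'=Z$: the commutator formula shows every commutator lies in $Z$, and choosing $(b,c)=(1,0)$, $(y,z)=(0,1)$ yields $[t_{0,1,0},t_{0,0,1}]=t_{-2,0,0}$, which generates all of $Z$ since $-2$ is a unit in $\GF(q)$ for $q$ odd and $Z$ is closed under addition in its first coordinate. Next, $Z\subseteq Z(E)$ because $t_{a,0,0}$ has $b=c=0$, so its commutator with anything is $t_{0,0,0}$; conversely any element with $(b,c)\neq(0,0)$ fails to commute with some generator by the same computation, giving $Z(E)=Z$. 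For the Frattini subgroup, $\Phi(E)=E'E^p$; by part (1) every $p$-th power is trivial, so $E^p=1$ and $\Phi(E)=E'=Z$. This establishes (3). Part (4) follows by assembling (3): $Z(E)=E'=\Phi(E)$ is exactly the definition of special. When $q=p$, the common subgroup $Z$ has order $p$, and $E$ has order $p^3=p^{1+2\cdot 1}$, so $E$ is extraspecial; its exponent is $p$ by (1).

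I do not anticipate a genuine obstacle here, since the arithmetic is controlled entirely by the two displayed formulas. The one point requiring mild care is the power computation in (1): I must verify that the quadratic cross-terms in the first coordinate really do cancel, which is precisely where the hypothesis that one is iterating a single element $t_{a,b,c}$ (so the middle and last coordinates grow linearly and the symmetric combination $cy-bz$ vanishes at each step) is used. The only place the parity of $q$ enters is the invertibility of $2$ needed to conclude $E'$ is all of $Z$ rather than a proper subgroup; for $q$ even that factor of $2$ collapses the commutators to the identity, which is exactly why part (2) looks so different from parts (3)--(4).
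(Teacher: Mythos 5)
Your overall strategy is the same as the paper's: everything is read off from the product formula and the commutator formula (\ref{eq:tcomms}). Your treatment of parts (1) and (2) is correct and matches the paper's justification (the cross terms $-b(kc)+c(kb)$ cancel at each step, so $(t_{a,b,c})^n=t_{na,nb,nc}$ and in particular $(t_{a,b,c})^p=t_{0,0,0}$), and your identification of the centre, the Frattini argument $\Phi(E)=E'E^p=E'$, and the deduction of part (4) from part (3) are all sound.

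However, there is a genuine error in your proof that $E'=Z:=\{t_{a,0,0}\mid a\in\mathrm{GF}(q)\}$ for $q$ odd. You exhibit the single commutator $[t_{0,1,0},t_{0,0,1}]=t_{-2,0,0}$ and claim it generates all of $Z$ because $-2$ is a unit in $\mathrm{GF}(q)$. This is false whenever $q=p^f$ with $f\geq 2$: the subgroup generated by $t_{-2,0,0}$ is $\{t_{-2n,0,0}\mid n\in\mathbb{Z}\}$, whose first coordinates form the prime-field multiples $\mathrm{GF}(p)\cdot(-2)=\mathrm{GF}(p)$, a set of size $p$; but $Z\cong(\mathrm{GF}(q),+)$ is elementary abelian of order $p^f$ and is not generated by any single element. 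Invertibility of $-2$ only lets you scale by integers, i.e.\ by the prime field, not by all of $\mathrm{GF}(q)$; ``closed under addition in the first coordinate'' does not make the group cyclic. Since the lemma is stated for all prime powers $q$, this step fails in every non-prime case. The repair is immediate and is exactly what the paper does: let one entry range over the field. For any $a\in\mathrm{GF}(q)$, formula (\ref{eq:tcomms}) gives $[t_{0,1,0},t_{0,0,-a/2}]=t_{a,0,0}$, so every element of $Z$ is itself a commutator, whence $Z\leqslant E'$; combined with your (correct) observation that all commutators lie in $Z$, this yields $E'=Z$. With this patch the remainder of your argument goes through as written.
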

\begin{proof}
The first two parts follow as $(t_{a,b,c})^p=t_{0,0,0}$ for all $a,b,c\in\mathrm{GF}(q)$. For $q$
odd, we have that $Z(E)=\{t_{a,0,0}\mid a\in\mathrm{GF}(q)\}$. Since $E/Z(E)$ is elementary abelian,
$E'\leqslant Z(E)$ and equality holds since by (\ref{eq:tcomms}), each element of $Z(E)$ is a
commutator. For $p$-groups, the Frattini subgroup is the smallest normal subgroup such that the
quotient is elementary abelian, and so $\Phi(E)=E'=Z(E)$. Thus the last two parts follow.
\end{proof}

\begin{remark}\label{remark:Eiselation}
 The group $E$ acts regularly on the points of $\mathsf{W}(3,q)$ not collinear with $\textbf{x}$ and
 fixes each line through $\textbf{x}$, that is, $\mathsf{W}(3,q)$ is an elation generalised
 quadrangle with elation group $E$. Moreover, for $p$ odd, $E$ is isomorphic to the
 ($3$-dimensional) Heisenberg group.
\end{remark}

For $\alpha\in\mathrm{GF}(q)$ define
$$\theta_{\alpha}:=\begin{pmatrix}
                     1   & 0&0&0\\
                  -\alpha& 1&0&0\\
                  -\alpha^2&\alpha &1&0\\
                    0      &0&\alpha &1
                  \end{pmatrix}\in \mathrm{Sp}(4,q)_{\textbf{x}}.$$

\begin{lemma}
Let $n\geq 1$ be an integer and $\alpha\in\mathrm{GF}(q)$. Then 
$$\theta_{\alpha}^n=\begin{pmatrix}
                       1 & 0 &0&0\\
                     -n\alpha& 1& 0& 0\\
                     \frac{-n(n+1)}{2}\alpha^2 & n\alpha & 1 & 0\\
                    \frac{-n(n^2-1)}{6}\alpha^3 & \frac{n(n-1)}{2}\alpha^2 &n\alpha &1
                    \end{pmatrix}$$
\end{lemma}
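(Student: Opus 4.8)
The plan is to write $\theta_\alpha = I + N$, where $N := \theta_\alpha - I$ is the strictly lower triangular matrix with nonzero entries $N_{21}=-\alpha$, $N_{31}=-\alpha^2$, $N_{32}=\alpha$ and $N_{43}=\alpha$. Since $N$ is strictly lower triangular it is nilpotent with $N^4 = 0$, and since $I$ commutes with $N$ the binomial theorem gives
$$\theta_\alpha^n = (I+N)^n = I + nN + \binom{n}{2}N^2 + \binom{n}{3}N^3.$$
Thus it suffices to compute the two matrices $N^2$ and $N^3$ and to read off each entry of the right-hand side.

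First I would record the low powers of $N$. A direct multiplication shows that $N^2$ has only the nonzero entries $(N^2)_{31}=-\alpha^2$, $(N^2)_{41}=-\alpha^3$ and $(N^2)_{42}=\alpha^2$, while $N^3$ has the single nonzero entry $(N^3)_{41}=-\alpha^3$; one then checks $N^4=0$, confirming that the sum above terminates after the $N^3$ term. Substituting these into the binomial expansion immediately yields the three subdiagonal entries equal to $-n\alpha$, together with the entry $\frac{n(n-1)}{2}\alpha^2$ in position $(4,2)$, which comes solely from $\binom{n}{2}(N^2)_{42}$.

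The only entries requiring more than a glance are the two below-diagonal entries of the first column, where contributions from several powers of $N$ accumulate. For the $(3,1)$ entry one combines $n\,N_{31}$ and $\binom{n}{2}(N^2)_{31}$ and simplifies $n + \binom{n}{2} = \frac{n(n+1)}{2}$. The genuinely delicate computation, and the step I expect to be the main obstacle, is the $(4,1)$ entry: here both the $\binom{n}{2}N^2$ and $\binom{n}{3}N^3$ terms contribute $-\alpha^3$, and one must verify the identity
$$\binom{n}{2} + \binom{n}{3} = \frac{n(n-1)}{2} + \frac{n(n-1)(n-2)}{6} = \frac{n(n-1)(n+1)}{6} = \frac{n(n^2-1)}{6},$$
which produces exactly the stated coefficient.

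Two remarks round out the argument. Because the coefficients $\frac{n(n+1)}{2}$, $\frac{n(n-1)}{2}$ and $\frac{n(n^2-1)}{6}$ are the binomial coefficients $\binom{n+1}{2}$, $\binom{n}{2}$ and $\binom{n+1}{3}$, they are integers, so the formula is well defined in $\mathrm{GF}(q)$ for every characteristic (including $p=2$ and $p=3$) once these integers are reduced modulo $p$. If one prefers to avoid the binomial theorem, the same result follows by induction on $n$, computing $\theta_\alpha^{n+1}=\theta_\alpha^n\theta_\alpha$ and checking that each entry of the product agrees with the formula at $n+1$; the inductive step reproduces precisely the coefficient identities above, with the $(4,1)$ entry again the only nontrivial verification.
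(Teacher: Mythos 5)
Your proof is correct, but it takes a genuinely different route from the paper. The paper proves the formula by induction on $n$: it multiplies $\theta_\alpha$ by the claimed form of $\theta_\alpha^{k-1}$ and checks entry by entry that the product matches the formula at $k$, with the coefficient identities (e.g.\ $\frac{(k-1)k}{2}+ (k-1) + 1 = \frac{k(k+1)}{2}$ and the cubic one in the $(4,1)$ entry) absorbed silently into the displayed matrix multiplication. You instead decompose $\theta_\alpha = I+N$ with $N$ nilpotent of index $4$ and apply the binomial theorem, so that $\theta_\alpha^n = I + nN + \binom{n}{2}N^2 + \binom{n}{3}N^3$ and each entry of the formula is identified as an explicit integer combination of binomial coefficients. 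All of your computations check out: the entries of $N^2$ and $N^3$ are as you state, $n+\binom{n}{2} = \binom{n+1}{2}$ handles the $(3,1)$ entry, and $\binom{n}{2}+\binom{n}{3} = \binom{n+1}{3} = \frac{n(n^2-1)}{6}$ handles the $(4,1)$ entry. Your approach buys two things the paper's induction does not make explicit: it explains \emph{why} the coefficients are (shifted) binomial coefficients rather than merely verifying them, and it addresses head-on the point that expressions like $\frac{n(n^2-1)}{6}$ must be read as integers reduced modulo $p$ --- a point that matters later in the paper, where $\theta_\alpha^p$ and $\theta_\alpha^2,\theta_\alpha^3$ are evaluated in characteristics $2$ and $3$, and which the paper's proof leaves implicit. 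The paper's induction, on the other hand, is self-contained matrix arithmetic requiring no appeal to the binomial theorem for commuting summands. Your closing remark correctly identifies the induction as an alternative; it is in fact exactly the paper's proof.
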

\begin{proof}
The lemma is clearly true for $n=1$ so assume it is true for some $n=k-1\geq 1$. Then
$$\begin{array}{rl}
   
\theta_{\alpha}^{k}=\theta_\alpha\theta_{\alpha}^{k-1} &= 
\begin{pmatrix}
                     1   & 0&0&0\\
                  -\alpha& 1&0&0\\
                  -\alpha^2&\alpha &1&0\\
                    0      &0&\alpha &1
                  \end{pmatrix}
\begin{pmatrix}
                       1 & 0 &0&0\\
                     -(k-1)\alpha& 1& 0& 0\\
                     \frac{-(k-1)k}{2}\alpha^2 & (k-1)\alpha & 1 & 0\\
                    \frac{-(k-1)k(k-2)}{6}\alpha^3 & \frac{(k-1)(k-2)}{2}\alpha^2 &(k-1)\alpha &1
                    \end{pmatrix}  \\
       &=\begin{pmatrix}
                       1 & 0 &0&0\\
                     -k\alpha& 1& 0& 0\\
                     \frac{-k(k+1)}{2}\alpha^2 & k\alpha & 1 & 0\\
                    \frac{-k(k^2-1)}{6}\alpha^3 & \frac{k(k-1)}{2}\alpha^2 &k\alpha &1
                    \end{pmatrix}
\end{array}
$$
Thus the result follows by induction.
\end{proof}

\begin{corollary}
\label{lem:thetaorder}
 For $p>3$ and $\alpha\in\mathrm{GF}(q)\backslash\{0\}$ the element $\theta_{\alpha}$ has order $p$ while for $p=2,3$ the element $\theta_{\alpha}$ has order $p^2$. In all cases 
$$\theta_{\alpha}^{-1}=\begin{pmatrix}
                        1&0&0&0\\
                       \alpha&1&0&0\\
                       0  &-\alpha&1&0\\
                       0 &\alpha^2& -\alpha&1
                       \end{pmatrix}.$$
\end{corollary}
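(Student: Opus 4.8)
The plan is to read the order straight off the formula for $\theta_\alpha^n$ established in the preceding lemma. First I would observe that $\theta_\alpha$ is lower unitriangular, hence a unipotent element of $\mathrm{Sp}(4,q)$ with $q=p^f$: writing $\theta_\alpha=I+N$ with $N$ strictly lower triangular (so $N^4=0$), the order of $\theta_\alpha$ is forced to be a power of $p$. It therefore suffices to evaluate the formula at $n=p$ and $n=p^2$ and see which first returns the identity. The only entries to monitor are the four coefficients $-n\alpha$, $\tfrac{-n(n+1)}{2}\alpha^2$, $\tfrac{n(n-1)}{2}\alpha^2$ and $\tfrac{-n(n^2-1)}{6}\alpha^3$, and since $\alpha\neq 0$ each vanishes in $\mathrm{GF}(q)$ precisely when its integer coefficient is divisible by $p$.

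For $p>3$ I would substitute $n=p$. Because $2$ and $6$ are units modulo $p$, each of $\tfrac{p(p+1)}{2}$, $\tfrac{p(p-1)}{2}$ and $\tfrac{p(p^2-1)}{6}$ is an integer divisible by $p$, as of course is $p$ itself; hence $\theta_\alpha^p=I$. As $\theta_\alpha\neq I$ (its $(2,1)$ entry $-\alpha$ is nonzero) and $p$ is prime, the order equals $p$. For $p\in\{2,3\}$ the denominators obstruct this, and I would first show $\theta_\alpha^p\neq I$ by exhibiting a surviving entry: at $n=2$ the $(3,1)$ entry is $-3\alpha^2\equiv\alpha^2\not\equiv 0\pmod 2$, while at $n=3$ the $(4,1)$ entry is $-4\alpha^3\equiv-\alpha^3\not\equiv 0\pmod 3$. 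I would then substitute $n=p^2$ and verify that every coefficient is now divisible by $p$ (for $p=2$, $n=4$ gives $-4,-10,6,-10$, all even; for $p=3$, $n=9$ gives $-9,-45,36,-120$, all divisible by $3$), so $\theta_\alpha^{p^2}=I$. Being a $p$-power strictly larger than $p$ yet dividing $p^2$, the order is $p^2$.

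For the inverse I would either verify directly that the displayed matrix $M$ satisfies $\theta_\alpha M=I$ by multiplying out the four nonzero subdiagonals, or—more economically—note that the entries of $\theta_\alpha^n$ in the lemma are polynomial in $n$, so the displayed formula is a genuine matrix identity valid for every integer $n$ (equivalently, $\theta_\alpha^n=\sum_{k=0}^{3}\binom{n}{k}N^k$ since $N^4=0$), and setting $n=-1$ reproduces $M$ exactly.

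The main obstacle is purely the characteristic book-keeping: one must treat $\tfrac{n(n+1)}{2}$, $\tfrac{n(n-1)}{2}$ and $\tfrac{n(n^2-1)}{6}$ as the integers $\binom{n+1}{2}$, $\binom{n}{2}$, $\binom{n+1}{3}$ and reduce modulo $p$ only afterwards, since dividing by $2$ or $6$ is meaningless when $p=2$ or $3$. It is exactly the failure of $6$ (respectively $2$) to be invertible that keeps the cubic (respectively quadratic) entry alive at $n=p$ and pushes the order up to $p^2$.
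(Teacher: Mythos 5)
Your proposal is correct and follows exactly the route the paper intends: the corollary is stated without proof precisely because it is read off from the formula for $\theta_\alpha^n$ in the preceding lemma, substituting $n=p$ and $n=p^2$ (with the coefficients interpreted as the integers $\binom{n+1}{2}$, $\binom{n}{2}$, $\binom{n+1}{3}$ before reducing modulo $p$) and $n=-1$ for the inverse. Your characteristic bookkeeping matches the paper's own subsequent displays of $\theta_\alpha^2$ for $p=2$ and $\theta_\alpha^3$ for $p=3$, which are exactly the surviving non-identity powers you identify.
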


Let 
$$R:=\{t_{a,b,0}\mid a,b\in\mathrm{GF}(q)\}.$$ Then $R$ is an elementary abelian subgroup of $E$ of
order $q^2$. By Remark \ref{remark:Eiselation}, $R$ acts semiregularly on the set of points of
$\mathsf{W}(3,q)$ not collinear with $\textbf{x}$. Let $Z:=\{t_{a,0,0}\mid a\in\mathrm{GF}(q)\}$ and
note that $Z=Z(E)$ when $q$ is odd.

For $p=3$ and $\alpha\in\mathrm{GF}(q)\backslash\{0\}$ we have
$$\theta_{\alpha}^3=   \begin{pmatrix}
                        1&0&0&0\\
                        0&1&0&0\\
                        0  &0&1&0\\
                       -\alpha^2&0& 0&1 \end{pmatrix}\in Z(E)$$ 
while for $p=2$ we have
$$\theta_\alpha^2= \begin{pmatrix}
                        1&0&0&0\\
                        0&1&0&0\\
                        \alpha^2  &0&1&0\\
                       \alpha^3&\alpha^2& 0&1 \end{pmatrix}\in R$$ 

We collect together the following relations between the $\theta_{\alpha}$ and elements of $E$.

\begin{lemma}
\label{lem:relations}
Let $a,b,c,\alpha,\beta\in\mathrm{GF}(q)$.
\begin{enumerate}
 \item\label{1} $\theta_{\alpha}^{-1}t_{a,b,c}\theta_{\alpha}=t_{-2\alpha^2 c-2\alpha b+a,\alpha c+b, c}$
 \item\label{2} $[t_{a,b,c},\theta_{\alpha}]=t_{-\alpha(c^2+2\alpha c+2b),\alpha c,0}$
 \item\label{3} $\theta_{\alpha}\theta_{\beta}=t_{\alpha^2\beta,\alpha\beta,0}\theta_{\alpha+\beta}$
 \item\label{4} $[\theta_{\alpha},\theta_{\beta}]=t_{\alpha\beta(\alpha-\beta),0,0}$
\end{enumerate}

\end{lemma}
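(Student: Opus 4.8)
The plan is to treat (1) and (3) as the computational core — each being a single product of the explicit lower-triangular $4\times 4$ matrices — and then to deduce (2) and (4) formally from them by means of the multiplication law $t_{a,b,c}t_{x,y,z}=t_{a+x-bz+cy,b+y,c+z}$ recorded earlier. This localises the one delicate cancellation in the lemma and avoids redoing matrix arithmetic.

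For (1), I would first compute $t_{a,b,c}\theta_{\alpha}$ and then left-multiply by $\theta_{\alpha}^{-1}$, whose explicit form is given in Corollary~\ref{lem:thetaorder}, reading off the resulting matrix. Every entry is immediate except those in the last row: there the $\alpha^3$ contributions cancel, while the terms $-\alpha^2 c$ and $-\alpha b$ each arise twice, which is exactly what produces the coefficient $-2\alpha^2 c-2\alpha b$ in the first coordinate. Part (3) is the analogous single product $\theta_{\alpha}\theta_{\beta}$; here the only entries needing attention are the $(3,1)$ and $(4,1)$ positions, and agreement with $t_{\alpha^2\beta,\alpha\beta,0}\theta_{\alpha+\beta}$ reduces to the polynomial identities $\alpha\beta-(\alpha+\beta)^2=-\alpha^2-\alpha\beta-\beta^2$ and $\alpha^2\beta-\alpha\beta(\alpha+\beta)=-\alpha\beta^2$.

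Granting (1), part (2) needs no further matrix work: since $(t_{a,b,c})^{-1}=t_{-a,-b,-c}$, we have $[t_{a,b,c},\theta_{\alpha}]=t_{-a,-b,-c}\bigl(\theta_{\alpha}^{-1}t_{a,b,c}\theta_{\alpha}\bigr)$, and substituting the value from (1) and applying the multiplication law yields a product of two $t$'s whose last two coordinates collapse to $\alpha c$ and $0$ and whose first coordinate simplifies to $-\alpha(c^2+2\alpha c+2b)$. For (4) I would write $[\theta_{\alpha},\theta_{\beta}]=(\theta_{\beta}\theta_{\alpha})^{-1}(\theta_{\alpha}\theta_{\beta})$ and apply (3) to both factors; after computing the inner product $t_{-\alpha\beta^2,-\alpha\beta,0}t_{\alpha^2\beta,\alpha\beta,0}=t_{\alpha\beta(\alpha-\beta),0,0}$ one is left with $\theta_{\alpha+\beta}^{-1}t_{\alpha\beta(\alpha-\beta),0,0}\theta_{\alpha+\beta}$, and the $b=c=0$ case of (1) shows that this element is fixed by conjugation by $\theta_{\alpha+\beta}$, giving $t_{\alpha\beta(\alpha-\beta),0,0}$.

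I expect no genuine obstacle: all four identities are forced by the group law inside $\mathrm{Sp}(4,q)_{\textbf{x}}$, and the only real risk is bookkeeping in the final rows of the $4\times 4$ products. The chief virtue of this organisation is that the single subtle cancellation — the doubling $-2\alpha^2 c-2\alpha b$ in the first coordinate of (1) — appears in exactly one place, and (2) and (4) then follow by pure group-law manipulation.
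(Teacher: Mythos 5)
Your proposal is correct, and for parts (\ref{1})--(\ref{3}) it is essentially the paper's own proof: the paper verifies (\ref{1}) and (\ref{3}) by explicit products of the $4\times 4$ matrices (associating the triple product differently in (\ref{1}), which is immaterial), and it too obtains (\ref{2}) from the result of (\ref{1}) by one further multiplication by $t_{a,b,c}^{-1}$ — the only cosmetic difference being that the paper carries this out on matrices while you invoke the recorded multiplication law $t_{a,b,c}t_{x,y,z}=t_{a+x-bz+cy,b+y,c+z}$, which is the same computation in coordinates. The one genuine divergence is part (\ref{4}): the paper computes $\theta_\alpha^{-1}\theta_\beta^{-1}$ as a matrix product and multiplies it against the intermediate matrix for $\theta_\alpha\theta_\beta$ obtained in the proof of (\ref{3}), whereas you avoid all further matrix arithmetic by writing $[\theta_\alpha,\theta_\beta]=(\theta_\beta\theta_\alpha)^{-1}(\theta_\alpha\theta_\beta)$, applying (\ref{3}) to both factors, simplifying $t_{-\alpha\beta^2,-\alpha\beta,0}t_{\alpha^2\beta,\alpha\beta,0}=t_{\alpha\beta(\alpha-\beta),0,0}$ via the multiplication law, and then using the $b=c=0$ case of (\ref{1}) to see that conjugation by $\theta_{\alpha+\beta}$ fixes this element; I checked each of these steps and they are sound. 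Your organisation buys a cleaner audit trail — the only delicate cancellation (the doubling $-2\alpha^2c-2\alpha b$, where the $\alpha^3$ terms cancel) occurs exactly once, and (\ref{2}) and (\ref{4}) become pure group-law consequences — at the cost of relying on the observation that elements $t_{a,0,0}$ are centralised by every $\theta_\gamma$, a fact the paper never needs to isolate since it simply multiplies matrices; both routes involve comparable total effort, and either is acceptable.
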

\begin{proof}
The first part follows as
 $$\begin{array}{rl}
  &\begin{pmatrix}
                        1&0&0&0\\
                       \alpha&1&0&0\\
                       0  &-\alpha&1&0\\
                       0 &\alpha^2& -\alpha&1
                       \end{pmatrix} \begin{pmatrix}
                1   & 0& 0& 0\\
                -c & 1& 0& 0\\
                b & 0& 1& 0\\
                a & b&c& 1
           \end{pmatrix} \begin{pmatrix}
                     1   & 0&0&0\\
                  -\alpha& 1&0&0\\
                  -\alpha^2&\alpha &1&0\\
                    0      &0&\alpha &1
                  \end{pmatrix} \\
= &\begin{pmatrix}
     1&0&0&0\\
   \alpha-c&1&0&0\\
   \alpha c +b &-\alpha&1&0\\
  -\alpha^2 c -\alpha b+a &\alpha^2+b& -\alpha+c& 1
      \end{pmatrix}  \begin{pmatrix}
                     1   & 0&0&0\\
                  -\alpha& 1&0&0\\
                  -\alpha^2&\alpha &1&0\\
                    0      &0&\alpha &1
                  \end{pmatrix} \\

= &\begin{pmatrix}
     1&0&0&0\\
     -c&1&0&0\\
     \alpha c+b&0&1&0\\
 -2\alpha^2 c-2\alpha b+a& \alpha c+b&c&1\end{pmatrix}
\end{array}.$$
The second part follows as 
$$\begin{pmatrix}
                        1&0&0&0\\
                       c&1&0&0\\
                       -b  &0&1&0\\
                       -a &-b& -c&1
                       \end{pmatrix}\begin{pmatrix}
                                         1&0&0&0\\
                                        -c&1&0&0\\
                                    \alpha c+b&0&1&0\\
                              -2\alpha^2 c-2\alpha b+a& \alpha c+b&c&1\end{pmatrix}
= \begin{pmatrix}
   1&0&0&0\\
   0&1&0&0\\
   \alpha c&0&1&0 \\
   -\alpha(c^2+2\alpha c+2b)& \alpha c&0&1
  \end{pmatrix}. $$
The third part follows from
$$\begin{array}{rl}
   \theta_{\alpha}\theta_{\beta}&=\begin{pmatrix}
                     1   & 0&0&0\\
                  -\alpha& 1&0&0\\
                  -\alpha^2&\alpha &1&0\\
                    0      &0&\alpha &1
                  \end{pmatrix} \begin{pmatrix}
                     1   & 0&0&0\\
                  -\beta& 1&0&0\\
                  -\beta^2&\beta &1&0\\
                    0      &0&\beta &1
                  \end{pmatrix} \\
       &=\begin{pmatrix}
                     1   & 0&0&0\\
                  -\alpha-\beta& 1&0&0\\
                  -\alpha^2-\alpha\beta-\beta^2&\alpha+\beta &1&0\\
                  -\alpha\beta^2  &\alpha\beta&\alpha+\beta &1
          \end{pmatrix}\\
       &= \begin{pmatrix}
            1   & 0&0&0\\
            0& 1&0&0\\
           \alpha\beta&0&1&0\\
           \alpha^2\beta&\alpha\beta&0&1
         \end{pmatrix}
         \begin{pmatrix}
             1   & 0&0&0\\
                  -(\alpha+\beta)& 1&0&0\\
                  -(\alpha+\beta)^2&\alpha+\beta &1&0\\
                   0 &\ 0&\alpha+\beta &1
         \end{pmatrix} \end{array}.$$

Finally,
$$\begin{array}{rl}
  
\theta_{\alpha}^{-1}\theta_{\beta}^{-1}\theta_{\alpha}\theta_{\beta} &
\begin{pmatrix}
                        1&0&0&0\\
                       \alpha&1&0&0\\
                       0  &-\alpha&1&0\\
                       0 &\alpha^2& -\alpha&1
                       \end{pmatrix}
\begin{pmatrix}
                        1&0&0&0\\
                       \beta&1&0&0\\
                       0  &-\beta&1&0\\
                       0 &\beta^2& -\beta&1
                       \end{pmatrix}
\begin{pmatrix}
                     1   & 0&0&0\\
                  -\alpha-\beta& 1&0&0\\
                  -\alpha^2-\alpha\beta-\beta^2&\alpha+\beta &1&0\\
                  -\alpha\beta^2  &\alpha\beta&\alpha+\beta &1
          \end{pmatrix}\\

&= \begin{pmatrix}
            1   & 0&0&0\\
        \alpha+\beta&1&0&0\\
       -\alpha\beta&-\alpha-\beta&1&0\\
       \alpha^2\beta&\alpha^2+\alpha\beta+\beta^2&-\alpha-\beta&1
   \end{pmatrix}
\begin{pmatrix}
                     1   & 0&0&0\\
                  -\alpha-\beta& 1&0&0\\
                  -\alpha^2-\alpha\beta-\beta^2&\alpha+\beta &1&0\\
                  -\alpha\beta^2  &\alpha\beta&\alpha+\beta &1
          \end{pmatrix} \\

 &= \begin{pmatrix}
                     1   & 0&0&0\\
                   0 & 1& 0& 0\\
                   0 & 0& 1& 0\\
                \alpha\beta(\alpha-\beta) & 0&0& 1
           \end{pmatrix}
\end{array}.
$$
\end{proof}

We will need the following lemma.
\begin{lemma}
 \label{lem:GLsylowp}
Let $S$ be the Sylow $p$-subgroup of $\mathrm{GL}(4,q)$ for $q=p^f$. If $p>3$ then $S$ has exponent
$p$, while if $p=2,3$ then $S$ has exponent $p^2$.
\end{lemma}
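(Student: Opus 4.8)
The plan is to realise $S$ concretely as the group of unitriangular matrices and then to compute $p$-th powers directly via the binomial theorem. First I would recall that the order of $\mathrm{GL}(4,q)$ is $q^{6}\prod_{i=1}^{4}(q^{i}-1)$, whose $p$-part is exactly $q^{6}$ since each factor $q^{i}-1$ is coprime to $p$. The group $U$ of upper unitriangular $4\times 4$ matrices over $\mathrm{GF}(q)$ is a $p$-group of order $q^{6}$, hence a Sylow $p$-subgroup of $\mathrm{GL}(4,q)$. As all Sylow $p$-subgroups are conjugate and the exponent is invariant under conjugacy, I may assume $S=U$. Every element of $U$ then has the form $I+N$ with $N$ strictly upper triangular, so $N$ is nilpotent with $N^{4}=0$.

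Since $I$ and $N$ commute, the binomial theorem gives, for every positive integer $m$,
$$(I+N)^{m}=\sum_{k=0}^{3}\binom{m}{k}N^{k},$$
the sum terminating at $k=3$ because $N^{4}=0$ (coefficients read in $\mathrm{GF}(q)$, i.e.\ modulo $p$). For $p>3$ I would observe that $\binom{p}{k}\equiv 0\pmod p$ for $k=1,2,3$, since each such $k$ is strictly smaller than the prime $p$ and the factor $p$ in the numerator is not cancelled by the denominator $k!$. Hence every term with $k\geq 1$ vanishes, giving $(I+N)^{p}=I$; as $S$ is a nontrivial $p$-group its exponent is then exactly $p$.

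For $p=2$ and $p=3$ the middle coefficients $\binom{p}{k}$ with $0<k<p$ still vanish modulo $p$, but now the index $k=p$ itself is $\le 3$ and survives the nilpotency cutoff, leaving $(I+N)^{p}=I+N^{p}$. Squaring when $p=2$, or cubing when $p=3$, and again invoking $N^{4}=0$ together with $\binom{p}{k}\equiv 0\pmod p$, shows $(I+N)^{p^{2}}=I$, so the exponent divides $p^{2}$. To see that it is \emph{exactly} $p^{2}$ I would exhibit a single regular unipotent element (a matrix $I+N$ with $N$ a single Jordan block), for which $N^{2}\ne 0$ when $p=2$ and $N^{3}\ne 0$ when $p=3$; such an element has $(I+N)^{p}=I+N^{p}\ne I$ and hence order $p^{2}$, completing the claim.

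The computations are entirely routine, so the only points needing care are the bookkeeping of binomial coefficients modulo $p$ and the recognition that the dichotomy $p>3$ versus $p\in\{2,3\}$ is forced precisely by whether the index $k=p$ occurs before the nilpotency degree $4$ truncates the expansion. The mildest subtlety is confirming that the bound $p^{2}$ is attained rather than merely an upper bound in the small-prime cases, which the Jordan block example settles.
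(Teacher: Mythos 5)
Your proof is correct and takes essentially the same approach as the paper: the paper argues via the Jordan blocks of an element $g\in S$ (a unipotent block of size $\ell\leq p$ has order $p$, while size-$4$ blocks have order $9$ for $p=3$ and size-$3$ or $4$ blocks have order $4$ for $p=2$), and those block-order facts amount to exactly the binomial computation you perform on $I+N$ with $N^4=0$. If anything, your write-up is slightly more self-contained, since you verify that the unitriangular group is a Sylow $p$-subgroup and exhibit a regular unipotent element to show the bound $p^2$ is attained, both of which the paper leaves implicit.
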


\begin{proof}
Let $g\in S$. Then the Jordan blocks of $g$ have sizes $1,2,3$ or $4$. If $\ell\leq p$, a Jordan
block of size $\ell$ has order $p$ as an element of $\mathrm{GL}(\ell,q)$. Hence if $p\geq 5$ then $g$ has exponent $p$. When $p=3$, Jordan
blocks of size 4 have order 9, while for $p=2$, Jordan blocks of size 3 and 4 have order 4. Hence
for $p=2,3$, $S$ has exponent $p^2$.
\end{proof}

We will also need the following.

\begin{lemma}
\label{lem:field}
Let $q=2^f$ with $f\geq 2$. Then $\{\alpha\beta(\alpha+\beta)\mid
\alpha,\beta\in\mathrm{GF}(q)\}=\mathrm{GF}(q)$ for $f\geq 3$ and $\{\alpha\beta(\alpha+\beta)\mid
\alpha,\beta\in\mathrm{GF}(4)\}=\{0,1\}$.
\end{lemma}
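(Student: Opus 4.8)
The plan is to dispose of the field $\mathrm{GF}(4)$ by hand and then, for $f\ge 3$, to reduce surjectivity of the form to a single statement about how trace-zero elements meet the cosets of the cubes. For $\mathrm{GF}(4)$ note that $\alpha\beta(\alpha+\beta)$ vanishes whenever $\alpha$, $\beta$ or $\alpha+\beta$ is $0$ (recall $\alpha+\beta=0$ iff $\alpha=\beta$ in characteristic $2$), so the only nonzero contributions come from the three unordered pairs of distinct nonzero elements; evaluating these using $\omega^2=\omega+1$ and $\omega^3=1$ gives the value $1$ in every case, so the image is $\{0,1\}$ as claimed.

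For $f\ge 3$ I would first normalise the form. Writing $\beta=t\alpha$ with $\alpha\ne 0$ gives $\alpha\beta(\alpha+\beta)=t(t+1)\alpha^3$, and since $\alpha=0$, $\beta=0$ and $\alpha=\beta$ all yield $0$, the set of \emph{nonzero} values is exactly $\{\,t(t+1)\alpha^3 : t\in\mathrm{GF}(q)\setminus\{0,1\},\ \alpha\in\mathrm{GF}(q)^\times\,\}$. The key observation is that the Artin--Schreier map $t\mapsto t^2+t$ is $\mathrm{GF}(2)$-linear with kernel $\{0,1\}$ and image equal to the trace-zero hyperplane $H=\ker\mathrm{Tr}$ (since $\mathrm{Tr}(t^2+t)=0$ and $|H|=q/2$); hence $\{t(t+1):t\neq 0,1\}=H\setminus\{0\}$. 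Letting $C$ denote the subgroup of nonzero cubes, the image of the form is therefore $\{0\}\cup (H\setminus\{0\})\,C$, and the lemma reduces to showing $(H\setminus\{0\})\,C=\mathrm{GF}(q)^\times$, i.e. that every coset of $C$ contains a nonzero element of trace $0$.

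Now split on the parity of $f$, using that $\gcd(3,2^f-1)=3$ precisely when $f$ is even. For $f$ odd one has $C=\mathrm{GF}(q)^\times$, so there is a single coset and the claim is immediate from the existence of one nonzero trace-zero element (clear since $|H|=q/2\ge 4$). The substantive case, and the main obstacle, is $f$ even, where $C$ has index $3$ and I must show each of the three cosets meets $H$. I would count, for a coset representative $\kappa$, the quantity $N_\kappa=\#\{x\in\kappa C:\mathrm{Tr}(x)=0\}$ by expanding the trace-zero indicator as $\tfrac12\bigl(1+\psi(x)\bigr)$ for the canonical additive character $\psi$, and the coset indicator in terms of a cubic multiplicative character $\chi$. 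This yields a main term $\tfrac{q-1}{6}$ together with an error controlled by the Gauss sums $g(\chi^j)$; applying the standard bound $|g(\chi^j)|=\sqrt q$ for $j=1,2$ (and $g(\chi^0)=-1$) gives $N_\kappa\ge\tfrac16\bigl(q-2-2\sqrt q\bigr)$, which is $\ge 1$ exactly when $q\ge 16$, hence for every even $f\ge 4$. Thus each coset contains a nonzero trace-zero element, and the reduction is complete. The only delicate point is packaging the Gauss-sum estimate cleanly; everything else is a finite computation or linear algebra over $\mathrm{GF}(2)$.
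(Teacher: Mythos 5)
Your proof is correct, and its decisive step is genuinely different from the paper's. In substance the two arguments share the same initial reduction, even though you make it explicit and the paper leaves it implicit: the nonzero values of the form are exactly $(H\setminus\{0\})C$, where $C$ is the group of nonzero cubes and $H$ is the trace-zero hyperplane --- the paper's scaling observation $(\omega x)(\omega y)(\omega x+\omega y)=\omega^3xy(x+y)$ gives closure under multiplication by $C$, and the paper's set $X=\{\alpha^2+\alpha\mid\alpha\in\mathrm{GF}(q)\}$ (the $\beta=1$ slice) is precisely your $H$, used there only as an unnamed $\mathrm{GF}(2)$-subspace of size $q/2$. Where you part ways is in proving that, for $f$ even, every coset of $C$ meets $H\setminus\{0\}$. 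The paper stays entirely elementary: $|H|=q/2>(q-1)/3$ forces $H$ to meet at least two cosets, and then a second slice $Y=\{\alpha^2\mu+\alpha\mu^2\mid\alpha\in\mathrm{GF}(q)\}=\mu^3H\neq H$ must exist because the image is larger than $H$; two distinct $\mathrm{GF}(2)$-hyperplanes have union of size $3\cdot 2^{f-2}>2(2^f-1)/3$, so the image cannot avoid any coset. You instead run a character-sum count: combining the additive character with a cubic multiplicative character and invoking the Gauss-sum bound $|g(\chi^j)|=\sqrt{q}$ gives $N_\kappa\geq\tfrac{1}{6}(q-2-2\sqrt{q})\geq 1$ for $q\geq 16$, which covers every even $f\geq 4$, the odd-$f$ case being trivial since then $C=\mathrm{GF}(q)^\times$. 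Your route imports a classical but distinctly heavier tool where the paper needs only counting and linear algebra over $\mathrm{GF}(2)$; in exchange it is more systematic --- essentially the same computation shows that the cosets of any multiplicative subgroup of bounded index meet a prescribed additive hyperplane once $q$ is large enough, whereas the paper's two-hyperplane trick is tailored to index $3$. It is also reassuring for your bookkeeping that your threshold $q\geq 16$ fails exactly at $q=4$, the genuine exception in the statement.
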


\begin{proof}
Let $S=\{\alpha\beta(\alpha+\beta)\mid \alpha,\beta\in\mathrm{GF}(q)\}$. We can easily check that
$S=\{0,1\}$ when $q=4$ so we may assume that $f\geq 3$. Clearly $S\ne\{0\}$ and so there exists
$x,y\in\mathrm{GF}(q)$ such that $xy(x+y)=a\neq 0$. Then for all $\omega\in \mathrm{GF}(q)$,
$(\omega x)(\omega y)(\omega x+\omega y)=\omega^3 a$. Thus if $a\in S$ then $S$ contains $\{\omega^3
a\mid\omega\in\mathrm{GF}(q)\}$. Let $T$ be the set of nonzero cubes in $\mathrm{GF}(q)$. If
$T=\mathrm{GF}(q)\backslash\{0\}$ then $S=\mathrm{GF}(q)$ so we may assume that $f$ is even. Then
the set of nonzero elements of $\mathrm{GF}(q)$ can be partitioned into the three sets $T$, $\xi T$
and $\xi^2 T$, where $\xi$ is a primitive element of $\mathrm{GF}(q)$. It remains to show that $S$
contains at least one element from each of these three sets, and then the result will follow. Now
$S$ contains the subset $X=\{\alpha^2+\alpha\mid\alpha\in\mathrm{GF}(q)\}$. The map
$\alpha\mapsto\alpha^2+\alpha$ is $\mathrm{GF}(2)$-linear with kernel $\mathrm{GF}(2)$. Hence
$|X|=2^{f-1}$. For $f\geq 3$, we have $|X|>|T|$ and so $X$ meets at least two of the sets $T$, $\xi
T$ and $\xi^2 T$. Thus $|S|\geq 2(2^f-1)/3>|X|$ and so there exists $\mu\in\mathrm{GF}(q)$ such that
the image $Y$ of the $\mathrm{GF}(2)$-linear map $\alpha\mapsto \alpha^2\mu+\alpha\mu^2$ is not $X$.
Then as $Y$ is another $\mathrm{GF}(2)$-subspace of $\mathrm{GF}(q)$ not equal to $X$ it follows
that $|X\cap Y|=2^{f-2}$ and so $|X\cup Y|=2^f-2^{f-2}>2(2^f-1)/3$. Hence $S$ meets each of $T$,
$\xi T$ and $\xi^2 T$ and so $S=\mathrm{GF}(q)$.
\end{proof}

\begin{construction}
\label{con:P}
Let $\{\alpha_1,\alpha_2,\ldots,\alpha_f\}$ be a basis for $\mathrm{GF}(q)$ over
$\mathrm{GF}(p)$. Let
$$P:=\langle R,\theta_{\alpha_1},\ldots,\theta_{\alpha_f}\rangle.$$ Note that $P$ is independent of
the choice of basis $\{\alpha_1,\alpha_2,\ldots,\alpha_f\}$ of $\mathrm{GF}(q)$ over
$\mathrm{GF}(p)$ since, by Lemma \ref{lem:relations}(\ref{3}), $P$ contains $\theta_{\alpha}$ for
all $\alpha\in\mathrm{GF}(q)$.
\end{construction}

\begin{lemma}
\label{lem:P}
The group $P$ has order $q^3$ and has the following properties:
\begin{enumerate}
 \item $P/R$ is elementary abelian of order $q$;
 \item For $q>2$, $P$ is nonabelian;
 \item For $q=2$, $P\cong C_4\times C_2$;
 \item For $q$ odd, $Z(P)=P'=\Phi(P)=Z(E)$;
 \item For $q>2$ even $Z(P)=R$. Moreover, $P'=Z$ for $q\geq 8$ and $P'=\{t_{0,0,0},t_{1,0,0}\}$ for
   $q=4$.
 \item For $p>3$, $P$ has exponent $p$; 
 \item For $p=2,3$, $P$ has exponent $p^2$.
\end{enumerate}
\end{lemma}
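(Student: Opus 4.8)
The plan is to first pin down the normal structure of $P$ and then read off each property from the commutator relations of Lemma~\ref{lem:relations}. To begin I would show that $R\trianglelefteq P$ and that every element of $P$ has the form $t_{a,b,0}\theta_\gamma$. Relation~\ref{lem:relations}(\ref{1}) with $c=0$ gives $\theta_\alpha^{-1}t_{a,b,0}\theta_\alpha=t_{-2\alpha b+a,b,0}\in R$, so $R$ is normalised by every $\theta_\alpha$ and hence $R\trianglelefteq P$; relation~\ref{lem:relations}(\ref{3}) then collapses any product of $\theta$'s into an element of $R$ times a single $\theta_\gamma$. Passing to $P/R$, relation~\ref{lem:relations}(\ref{3}) shows $\alpha\mapsto R\theta_\alpha$ is a homomorphism from $(\mathrm{GF}(q),+)$ onto $P/R$, and since $\theta_\alpha\in R$ forces $\alpha=0$ (inspect the $(2,1)$ entry), it is injective. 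This yields part~(1), namely $P/R\cong(\mathrm{GF}(q),+)$ is elementary abelian of order $q$, and hence $|P|=|R|\cdot q=q^3$.

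For the exponent (parts~(6) and~(7)) I would observe that all generators of $P$ are lower unitriangular, so $P$ lies inside a Sylow $p$-subgroup of $\mathrm{GL}(4,q)$. Lemma~\ref{lem:GLsylowp} then bounds the exponent of $P$ by $p$ when $p>3$ and by $p^2$ when $p\in\{2,3\}$. Since $P\neq 1$ this settles part~(6), and for part~(7) it remains only to exhibit an element of order $p^2$, namely any $\theta_\alpha$ with $\alpha\neq 0$, whose order is $p^2$ by Corollary~\ref{lem:thetaorder}.

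The core is parts~(4) and~(5). The generating commutators $[t_{a,b,0},t_{x,y,0}]=1$, $[t_{a,b,0},\theta_\alpha]=t_{-2\alpha b,0,0}$ (relation~\ref{lem:relations}(\ref{2}) with $c=0$) and $[\theta_\alpha,\theta_\beta]=t_{\alpha\beta(\alpha-\beta),0,0}$ (relation~\ref{lem:relations}(\ref{4})) all lie in $Z=\{t_{a,0,0}\}$, and $Z$ is central in $P$ (relation~\ref{lem:relations}(\ref{1}) shows $\theta_\alpha$ centralises $Z$). Hence $P'\leq Z$ and $P$ has nilpotency class $2$, so the commutator map induces a biadditive form on $P/Z(P)$ with values in $Z$. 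For $q$ odd the factor $-2$ is invertible, so $[t_{a,b,0},\theta_\alpha]$ already sweeps out all of $Z$, giving $P'=Z=Z(E)$ by Lemma~\ref{lem:E}(3); the radical condition for the form becomes a polynomial of degree $\le 2$ in the $\theta$-parameter $\delta$ that must vanish for all $\delta$, which forces it to be zero since $q\ge 3$, whence $Z(P)=Z$. For the Frattini subgroup I would use $\Phi(P)=P'P^p$ and show $P^p\le Z$: this is immediate when $p>3$ (exponent $p$), and for $p=3$ it follows since $P/Z$ is abelian and generated by elements of order $3$ (using $\theta_\alpha^3=t_{-\alpha^2,0,0}\in Z$), giving $\Phi(P)=Z=Z(E)$ and completing part~(4). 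In characteristic $2$ the term $-2\alpha b$ vanishes, so $R$ centralises every $\theta_\alpha$ and thus $R\le Z(P)$; conversely, since $R\le Z(P)$, a central element $t_{a,b,0}\theta_\gamma$ would force $\theta_\gamma$ central, but $[\theta_\gamma,\theta_\delta]=t_{\gamma\delta(\gamma+\delta),0,0}\ne 1$ for a suitable $\delta\notin\{0,\gamma\}$ when $\gamma\ne 0$ and $q\ge 4$, giving $Z(P)=R$. The derived subgroup is then governed entirely by the image $\{t_{\alpha\beta(\alpha+\beta),0,0}\}$, so Lemma~\ref{lem:field} yields $P'=Z$ for $q\ge 8$ and $P'=\{t_{0,0,0},t_{1,0,0}\}$ for $q=4$, which is part~(5). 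Finally parts~(2) and~(3) drop out: relation~\ref{lem:relations}(\ref{4}) (with Lemma~\ref{lem:field} for $q=4$) produces a nontrivial commutator whenever $q>2$, while for $q=2$ relation~\ref{lem:relations}(\ref{1}) shows $\theta_1$ centralises $R$, so $P$ is abelian of order $8$ with $\theta_1$ of order $4$ and $\theta_1^2=t_{1,1,0}\in R$, forcing $P\cong C_4\times C_2$.

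The main obstacle I anticipate is the even-characteristic derived subgroup: once $Z(P)=R$ collapses the commutator map to a single pairing on $P/R\cong(\mathrm{GF}(q),+)$ valued in $Z$, the precise image $\{t_{\alpha\beta(\alpha+\beta),0,0}\}$ must be identified, and this is exactly where the nonlinear field-theoretic input of Lemma~\ref{lem:field} is essential to separate $q=4$ from $q\ge 8$. The one other delicate point is the $p=3$ Frattini computation, where $P$ has exponent $9$ yet $\Phi(P)$ must still equal $Z$; this forces the argument to run at the level of $P/Z$ rather than through the exponent.
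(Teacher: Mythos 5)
Your proposal is correct and follows essentially the same route as the paper: normality of $R$ and relation (3) give $|P|=q^3$ and $P/R\cong(\mathrm{GF}(q),+)$, the commutator relations of Lemma \ref{lem:relations} together with Lemma \ref{lem:field} identify $Z(P)$, $P'$ and $\Phi(P)$ in the odd and even cases, and Lemma \ref{lem:GLsylowp} plus the order of $\theta_\alpha$ settles the exponent. Your treatment is a little more explicit than the paper's in two spots (the polynomial-in-$\delta$ argument pinning down $Z(P)=Z(E)$ for $q$ odd, and the $p=3$ Frattini computation via $\theta_\alpha^3=t_{-\alpha^2,0,0}\in Z$), but these are elaborations of the same argument, not a different method.
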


\begin{proof}
By Lemma \ref{lem:relations}(\ref{1}) each $\theta_{\alpha_i}$ normalises $R$, and by Lemma
\ref{lem:relations}(\ref{3}), $\theta_\alpha\theta_\beta\in R\theta_{\alpha+\beta}$. Thus $|P|=q^3$
and $P/R$ is isomorphic to the additive group of $\mathrm{GF}(q)$ (and so (1) holds). Hence
$P'\leqslant R$.  It follows from Lemma \ref{lem:relations}(\ref{1}) that for $q>2$, $P$ is
nonabelian (and so (2) holds). Moreover, for $q$ odd, $C_R(\theta_{\alpha})=Z(E)$ and it follows
that $Z(P)=Z(E)$. For $q$ even we have $C_R(\theta_{\alpha})=R$ and if $q>2$ it follows that
$Z(P)=R$. For $q=2$, $P$ is an abelian group of exponent 4 and so is isomorphic to $C_4\times
C_2$. Thus (3) holds.

When $q$ is odd, Lemma \ref{lem:relations}(\ref{1}) and (3) implies that $P/Z(E)$ is an elementary
abelian group of order $q^2$ and so $P'\leqslant Z(E)$. Moreover, by Lemma \ref{lem:relations}(2)
each element of $Z(E)$ is a commutator of elements of $P$. Thus $P'=\Phi(P)=Z(E)$ and so (4) holds.

For $q$ even, Lemma \ref{lem:relations}(4) implies that for $q>2$ we have $X= \langle
t_{\alpha\beta(\alpha+\beta)0,0}\mid\alpha,\beta\in\mathrm{GF}(q)\rangle\leqslant P'$. Lemma
\ref{lem:field} implies that $X=Z$ when $q\geq 8$ while $X=\{t_{0,0,0},t_{1,0,0}\}$ for $q=4$. Since
$P/X$ is abelian (Lemma \ref{lem:relations}(\ref{2}) and (\ref{4})), it follows that $P'=X$ and
hence (5) holds.

Since $P$ is contained in a Sylow $p$-subgroup of $\mathrm{GL}(4,q)$, Lemma \ref{lem:GLsylowp}
implies that the exponent of $P$ is $p$ for $p\geq 5$ (and (6) holds) and at most $p^2$ for
$p=2,3$. For $p=2,3$, each $\theta_{\alpha}\in P$ has order $p^2$ and so the exponent of $P$ is
indeed $p^2$. Therefore, (7) holds.
\end{proof}

\begin{corollary}
\label{cor:PnotE}
 For $p=2,3$, $P\not\cong E$.
\end{corollary}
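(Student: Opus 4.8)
The plan is to separate $E$ and $P$ by a single isomorphism invariant, the exponent, rather than by any finer structural comparison. Since the exponent of a finite group is preserved under isomorphism, it is enough to exhibit one value of the exponent for $E$ and a different value for $P$ in the range $p\in\{2,3\}$.

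First I would recall from Lemma \ref{lem:E}(1) that $E$ has exponent $p$ for every $q=p^f$; this was established from the fact that $(t_{a,b,c})^p=t_{0,0,0}$. Second, I would invoke Lemma \ref{lem:P}(7), which asserts that for $p=2,3$ the group $P$ has exponent $p^2$ (indeed each $\theta_{\alpha}$ has order $p^2$ in these characteristics, by Corollary \ref{lem:thetaorder}). Since $p\neq p^2$, the groups $E$ and $P$ have distinct exponents and therefore cannot be isomorphic.

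The main point to be careful about is simply that one cites the correct parity of results: the relevant exponent statement for $E$ is the uniform one in Lemma \ref{lem:E}(1) (valid for all $q$), while for $P$ one must use Lemma \ref{lem:P}(7) rather than (6), since it is only for $p=2,3$ that $P$ acquires exponent $p^2$. There is no genuine obstacle here, as the exponent computations were the substance of the two preceding lemmas; the corollary is an immediate consequence.

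\begin{proof}
By Lemma \ref{lem:E}(1), $E$ has exponent $p$, whereas by Lemma \ref{lem:P}(7), for $p=2,3$ the group $P$ has exponent $p^2$. Since isomorphic groups have equal exponent, $P\not\cong E$.
\end{proof}
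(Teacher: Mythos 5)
Your proof is correct and follows exactly the paper's own argument: the paper's proof of Corollary \ref{cor:PnotE} reads ``This follows by comparing the exponents of $E$ and $P$,'' which is precisely your invocation of Lemma \ref{lem:E}(1) (exponent $p$ for $E$) against Lemma \ref{lem:P}(7) (exponent $p^2$ for $P$ when $p=2,3$). Your version merely makes the citations explicit, which is fine.
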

\begin{proof}
 This follows by comparing the exponents of $E$ and $P$.
\end{proof}

\begin{lemma}
 For $q=p^f$ with $p>3$, $E\cong P$.
\end{lemma}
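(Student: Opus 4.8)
The plan is to write down an explicit isomorphism $E\to P$ and check the homomorphism property using the relations of Lemma~\ref{lem:relations}. Note first that both groups have order $q^3$ and, since $p>3$, both have exponent $p$ (Lemma~\ref{lem:E}(1) and Lemma~\ref{lem:P}(6)); moreover both are special with $Z(E)=E'=Z(P)=P'$ equal to the subgroup $Z=\{t_{a,0,0}\mid a\in\mathrm{GF}(q)\}$ by Lemma~\ref{lem:E}(3) and Lemma~\ref{lem:P}(4). So the two groups are \emph{a priori} of the same shape, and the only task is to match their multiplications. The hypothesis $p>3$ is genuinely needed here: for $p=2,3$ the groups have different exponents, so $E\not\cong P$ by Corollary~\ref{cor:PnotE}.

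First I would fix a normal form for the elements of $P$. Since $P=\langle R,\theta_\alpha\mid\alpha\in\mathrm{GF}(q)\rangle$ and $|P|=q^3$, every element of $P$ is uniquely $s_{a,b,c}:=t_{a,b,0}\theta_c$ with $a,b,c\in\mathrm{GF}(q)$. Using Lemma~\ref{lem:relations}(\ref{1}) to move $\theta_c$ past a member of $R$ (which gives $\theta_c t_{x,y,0}\theta_c^{-1}=t_{x+2cy,y,0}$) and Lemma~\ref{lem:relations}(\ref{3}) to combine $\theta_c\theta_z$, a short calculation should yield the multiplication rule
\[ s_{a,b,c}\,s_{x,y,z}=s_{\,a+x+2cy+c^2z,\;b+y+cz,\;c+z}. \]
This is the analogue for $P$ of the rule $t_{a,b,c}t_{x,y,z}=t_{a+x-bz+cy,\,b+y,\,c+z}$ established above for $E$.

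Comparing the two rules, the third coordinate is additive in each case, so I would look for an isomorphism of the form $\Phi(t_{a,b,c})=s_{A(a,b,c),\,B(a,b,c),\,c}$ fixing the third coordinate. The homomorphism condition then splits into one equation for $B$ and one for $A$. Writing $B(a,b,c)=b+g(c)$ reduces the middle-coordinate condition to the functional equation $g(c+z)-g(c)-g(z)=cz$, solved by $g(c)=\tfrac12c^2$. Writing $A(a,b,c)=a+\phi(b,c)$ reduces the first-coordinate condition to $\phi(b+y,c+z)-\phi(b,c)-\phi(y,z)=cy+bz+cz^2+c^2z$; one checks the right-hand side is symmetric under $(b,c)\leftrightarrow(y,z)$ (a necessary consistency condition) and that it is solved by $\phi(b,c)=bc+\tfrac13c^3$, since $bc$ supplies the bilinear part $cy+bz$ and $\tfrac13c^3$ supplies $c^2z+cz^2$. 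Thus the candidate isomorphism is
\[ \Phi(t_{a,b,c})=t_{a+bc+\frac13c^3,\;b+\frac12c^2,\;0}\,\theta_c, \]
which is visibly a bijection (one recovers $c$, then $b$, then $a$), so it only remains to confirm it is a homomorphism, and this is exactly what the two functional equations guarantee.

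The main obstacle is the bookkeeping in the last paragraph: identifying the correct correction terms $\tfrac12c^2$ and $\tfrac13c^3$ and verifying the symmetry of the target of the $\phi$-equation. This is precisely where $p>3$ enters, since the coefficients $\tfrac12$ and $\tfrac13$ must lie in $\mathrm{GF}(q)$; equivalently $6$ must be invertible. Structurally, the content is that the commutator maps $E/Z\times E/Z\to Z$ and $P/Z\times P/Z\to Z$ are equivalent alternating forms, the extra cubic contribution $\alpha\beta(\alpha-\beta)$ from Lemma~\ref{lem:relations}(\ref{4}) being absorbable by a shear exactly when $6\neq 0$; but the explicit map above appears to be the most economical route and avoids invoking any general classification of class-$2$ exponent-$p$ groups.
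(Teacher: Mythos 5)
Your proof is correct, and it rests on the same underlying idea as the paper's --- an explicit isomorphism whose correction terms involve $\alpha^2/2$ --- but the execution is genuinely different. The paper defines a map in the opposite direction, $P\to E$, only on generators: the identity on $R$ and $\theta_{\alpha_i}\mapsto t_{0,-\alpha_i^2/2,\alpha_i}$; it then checks that generator orders and the commutators from Lemma \ref{lem:relations} are preserved and asserts that the map ``extends to an isomorphism''. That final step silently uses the fact that the checked relations amount to a presentation of these class-two exponent-$p$ groups. Your argument avoids any such appeal: by first establishing the normal form $P=\{t_{a,b,0}\theta_c \mid a,b,c\in\mathrm{GF}(q)\}$ (which does follow from $|P|=q^3$ together with parts (1) and (3) of Lemma \ref{lem:relations}, since the union of the cosets $R\theta_c$ is a subgroup containing the generators) and computing the full multiplication rule $s_{a,b,c}s_{x,y,z}=s_{a+x+2cy+c^2z,\,b+y+cz,\,c+z}$, you can define the map on every element at once, $\Phi(t_{a,b,c})=t_{a+bc+\frac{1}{3}c^3,\,b+\frac{1}{2}c^2,\,0}\theta_c$, and verify the homomorphism identity directly; I checked the two functional equations and the symmetry condition, and they are right. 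The two maps are nearly mutually inverse: your $\Phi$ sends the paper's generator image $t_{0,-\alpha^2/2,\alpha}$ to $t_{-\alpha^3/6,0,0}\theta_\alpha$, i.e.\ to $\theta_\alpha$ up to a central factor. What your route buys is self-containedness and a transparent view of exactly where $p>3$ enters (invertibility of $2$ and $3$, matching the paper's use of $p$ odd for $\alpha_i^2/2$ and $p>3$ for the generator orders); what it costs is the normal-form and coordinate bookkeeping that the paper's generator-based shortcut sidesteps.
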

\begin{proof}
Define the map $$\begin{array}{rclll}
           \phi:&P&\rightarrow &E&\\
                &t_{a,b,0}&\mapsto& t_{a,b,0}& a,b\in\mathrm{GF}(q)\\
                &\theta_{\alpha_i}&\mapsto &t_{0,-\alpha_i^2/2,\alpha_i} &i=1,\ldots,f
         \end{array}$$
which maps a set of generators of $P$ to a set of generators for $E$. The generators for each group
have order $p$. Since
$[\theta_{\alpha},\theta_\beta]=t_{\alpha\beta(\alpha-\beta),0,0}=[t_{0,-\alpha^2/2,\alpha},t_{0,-\beta^2/2,\beta}]$
and $[t_{a,b,0},\theta_\alpha]=t_{-2\alpha b,0,0}=[t_{a,b,0},t_{0,-\alpha^2/2,\alpha}]$ this map
extends to an isomorphism.
\end{proof}

\begin{lemma}
\label{lem:Pregular}
The group $P$ acts regularly on the set of points of $\mathsf{W}(3,q)$ not collinear with
$\textbf{x}$. Moreover, $P$ fixes the line $\langle (1,0,0,0),(0,1,0,0)\rangle$ but transitively
permutes the remaining $q$ lines through $\textbf{x}$.
\end{lemma}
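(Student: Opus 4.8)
The plan is to exploit the numerical coincidence that, by Lemma~\ref{lem:P}, $|P|=q^3$ equals the number of points of $\mathsf{W}(3,q)$ not collinear with $\textbf{x}$, so that once transitivity is established, regularity follows immediately from the orbit--stabiliser theorem. Indeed, since $\beta((1,0,0,0),v)=v_4$, a point $\langle v\rangle$ fails to be collinear with $\textbf{x}$ exactly when $v_4\neq 0$; normalising $v_4=1$ exhibits these as the $q^3$ points $\langle(a,b,c,1)\rangle$. Every generator of $P$ lies in $\mathrm{Sp}(4,q)_{\textbf{x}}$, hence fixes $\textbf{x}$ and preserves collinearity with $\textbf{x}$, so $P$ acts on this set of $q^3$ points.

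To prove transitivity I would work with the row-vector action $\langle v\rangle\mapsto\langle vM\rangle$ (the convention under which $E\leqslant\mathrm{Sp}(4,q)_{\textbf{x}}$ fixes $\textbf{x}=\langle(1,0,0,0)\rangle$) and take $\langle(0,0,0,1)\rangle$ as base point. A one-line matrix computation gives $(0,0,0,1)\theta_\alpha=(0,0,\alpha,1)$ and then $(0,0,\alpha,1)t_{a,b,0}=(\alpha b+a,\,b,\,\alpha,\,1)$. Recalling from Construction~\ref{con:P} that $\theta_\alpha\in P$ for every $\alpha\in\mathrm{GF}(q)$, it follows that for any target $(A,B,C,1)$ the element $\theta_C\,t_{A-CB,\,B,\,0}\in P$ carries the base point to it. Hence $P$ is transitive on the $q^3$ points, and since $|P|=q^3$ the point stabiliser is trivial, that is, $P$ acts regularly.

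For the second assertion I would parametrise the lines through $\textbf{x}$ by the totally isotropic planes $\langle(1,0,0,0),(0,w_2,w_3,0)\rangle$, equivalently by the directions $\langle(w_2,w_3)\rangle$ in the coordinates $2,3$, giving the expected $q+1$ lines. Computing the induced action shows that $R$ (indeed all of $E$) fixes every line through $\textbf{x}$, while $\theta_\alpha$ induces the shear $(w_2,w_3)\mapsto(w_2+\alpha w_3,\,w_3)$. The direction $(1,0)$, which is precisely the line $\langle(1,0,0,0),(0,1,0,0)\rangle$, is fixed by every such shear, so $P$ fixes this line. On the remaining directions, normalised to $(w_2,1)$, the shear acts by $w_2\mapsto w_2+\alpha$; as $\alpha$ runs over $\mathrm{GF}(q)$ this is the regular action of the additive group, so $P$ permutes the other $q$ lines transitively.

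I expect no serious obstacle: the whole argument reduces to computing the images of the base point and of a generic line-direction under the two families of generators $t_{a,b,0}$ and $\theta_\alpha$. The only things demanding care are fixing the row-vector action convention consistently throughout, and noticing at the outset that the equality $|P|=q^3$ lets transitivity do the work of a separate semiregularity verification.
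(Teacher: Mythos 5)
Your proof is correct, but the regularity half runs in the opposite direction to the paper's. The paper fixes the base point $\textbf{y}=\langle(0,0,0,1)\rangle$ and shows its stabiliser in $P$ is trivial: it writes a general element as $t_{a,b,0}\theta_{\alpha_1}^{n_1}\cdots\theta_{\alpha_f}^{n_f}$, observes that fixing $\textbf{y}$ forces $n_1\alpha_1+\cdots+n_f\alpha_f=0$, uses $\mathrm{GF}(p)$-linear independence of the $\alpha_i$ to get $p\mid n_i$, hence $\theta_{\alpha_i}^{n_i}\in R$ and the whole element lies in $R$, and finally invokes the semiregularity of $R$ (inherited from the elation group $E$ of Remark \ref{remark:Eiselation}); counting $|P|=q^3$ then upgrades the trivial stabiliser to regularity. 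You instead prove transitivity directly, exhibiting the explicit transporter $\theta_C\,t_{A-CB,B,0}$ from the base point to an arbitrary point $\langle(A,B,C,1)\rangle$, and let the same counting $|P|=q^3$ deliver trivial stabilisers. Both halves of "regular" are thus established by complementary routes anchored to the same numerical coincidence. Your version is more self-contained: it needs only two one-line matrix products, the fact that $\theta_\alpha\in P$ for all $\alpha$ (Lemma \ref{lem:relations}(\ref{3})), and no appeal to the semiregularity of $R$ or to the computed powers $\theta_\alpha^p$. The paper's version, by verifying a stabiliser rather than solving for a transporter, transfers with no change of template to the subgroups $S_{U,W}$ (Lemma \ref{lem:SUWregular}), where only some of the $\theta_\alpha$ are available and an explicit transporter takes slightly more bookkeeping. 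For the statement about lines, your argument is essentially the paper's: both reduce to the induced action on $\textbf{x}^\perp/\textbf{x}$, where $E$ acts trivially and $\theta_\alpha$ induces the shear $\left(\begin{smallmatrix}1&0\\ \alpha&1\end{smallmatrix}\right)$, which fixes the direction of $(0,1,0,0)$ and translates the remaining $q$ directions regularly; you merely spell out the computation the paper leaves implicit.
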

\begin{proof}
Consider the image of $\textbf{y}=\langle (0,0,0,1)\rangle$ under $g\in P$. Then
$g=t_{a,b,0}\theta_{\alpha_1}^{n_1}\theta_{\alpha_2}^{n_2}\ldots\theta_{\alpha_f}^{n_f}$ for some
$t_{a,b,0}\in R$ and integers $n_1,\ldots,n_f$. The third coordinate of $(0,0,0,1)^g$ is equal to
$n_1\alpha_1+\cdots+ n_f\alpha_f$.  Thus if $g\in P_{\textbf{y}}$ then $n_1\alpha_1+\cdots
+n_f\alpha_f=0$. Since $\{\alpha_1,\ldots,\alpha_f\}$ is a linearly independent set over
$\mathrm{GF}(p)$ it follows that $p$ divides $n_i$ for all $i$. Hence for each $i$,
$\theta_{\alpha_i}^{n_i}\in R$ and so $g\in R_{\textbf{y}}$. However, $R$ acts semiregularly on the
set of points not collinear with $\textbf{x}$. Thus $g=1$ and since $|P|=q^3$, we have that $P$ acts
regularly on the set of points of $\mathsf{W}(3,q)$ not collinear with $\textbf{x}$.

Since each $\theta_{\alpha_i}$ induces $\begin{pmatrix}
                                        1&0\\\alpha_i&1
                                       \end{pmatrix}$ 
on $\textbf{x}^{\perp}/\textbf{x}$ it follows that $P$ fixes one line through $\textbf{x}$ and
transitively permutes the remaining $q$.
\end{proof}

For $q=3$ the groups $E$ and $P$ are the two regular subgroups of $Q^-(5,2)$ given in Theorem \ref{MainTheorem}.

\begin{lemma}
\label{lem:unique}
Let $\mathcal{Q}^x$ be the generalised quadrangle obtained by Payne derivation from
$\mathsf{W}(3,p)$ for $p\geq 5$ a prime and suppose that $G$ acts regularly on the set of points of
$\mathcal{Q}^x$. Then $G\cong E\cong P$.
\end{lemma}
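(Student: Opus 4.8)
The plan is to determine $G$ from only three facts: its order, its exponent, and the impossibility of abelian regularity. First I would count points. The points of $\mathcal{Q}^x$ are the points of $\mathsf{W}(3,p)$ not collinear with $\textbf{x}$, and there are $(p+1)(p^2+1)-(p^2+p+1)=p^3$ of these. Since $G$ is regular, $|G|=p^3$, so $G$ is a $p$-group. By \cite{GJS}, for $p\geq 5$ the full group $\Aut(\mathcal{Q}^x)$ is the stabiliser of $\textbf{x}$ in $\mathrm{P}\Gamma\mathrm{Sp}(4,p)$; as $p$ is prime there are no field automorphisms, so this is $\mathrm{PGSp}(4,p)_{\textbf{x}}$, and $G$ sits inside it.

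Next I would show $G$ has exponent $p$. The Sylow $p$-subgroups of $\mathrm{PGSp}(4,p)$ coincide with those of $\mathrm{Sp}(4,p)$, since the similitude and scalar factors have order coprime to $p$; and these embed in a Sylow $p$-subgroup of $\mathrm{GL}(4,p)$, which has exponent $p$ for $p\geq 5$ by Lemma \ref{lem:GLsylowp}. As $G$ is a $p$-group it lies in a conjugate of such a Sylow subgroup, so $G$ has exponent $p$.

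Then I would argue $G$ is nonabelian. The quadrangle $\mathcal{Q}^x$ has order $(p-1,p+1)$ with $p\geq 5$, hence is thick. If $G$ were abelian, the theorem of De Winter and K.\ Thas \cite{abelian} would force $\mathcal{Q}^x$ to be the Payne derivation of a translation generalised quadrangle of \emph{even} order. But a Payne derivation of a quadrangle of order $(s,s)$ has order $(s-1,s+1)$, and matching this with $(p-1,p+1)$ forces $s=p$, which is odd --- a contradiction. Hence $G$ is nonabelian. Finally, for odd $p$ a nonabelian group of order $p^3$ and exponent $p$ is unique up to isomorphism, namely the Heisenberg group, which by Lemma \ref{lem:E}(4) and Remark \ref{remark:Eiselation} is $E$ and which is isomorphic to $P$ for $p>3$. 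Therefore $G\cong E\cong P$.

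The main obstacle is not the final classification but the two structural reductions feeding into it: verifying that passage to the projective similitude group does not disturb the exponent-$p$ conclusion, and checking that the hypotheses of \cite{abelian} genuinely apply (the thickness of $\mathcal{Q}^x$ and the order bookkeeping ruling out an even-order source). A self-contained alternative that avoids \cite{abelian} is to realise $G$ as a maximal subgroup of the Sylow $p$-subgroup $S=E\langle\theta_1\rangle$ of order $p^4$, enumerate the $p+1$ maximal subgroups through $S/\Phi(S)$, and test regularity of each; on this route the hard case is showing that the unique elementary abelian maximal subgroup --- which one checks is $C_S(t_{0,1,0})$ --- fails to act semiregularly, so that every regular $G$ is one of the nonabelian maximal subgroups and hence $\cong E$.
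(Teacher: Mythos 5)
Your proposal is correct and follows essentially the same route as the paper's proof: count $p^3$ points, embed $G$ in a Sylow $p$-subgroup of $\mathrm{GL}(4,p)$ to get exponent $p$ via Lemma \ref{lem:GLsylowp}, classify the groups of order $p^3$, and rule out the abelian case by De Winter--Thas \cite{abelian}. Your additional care about passing from $\mathrm{P}\Gamma\mathrm{Sp}(4,p)_{\textbf{x}}$ to $\mathrm{GL}(4,p)$ and the parity argument for invoking \cite{abelian} are just expansions of steps the paper states tersely, not a different method.
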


\begin{proof}
 Since $|G|=p^3$ and is contained in a Sylow $p$-subgroup of $\mathrm{GL}(4,p)$, by Lemma
 \ref{lem:GLsylowp}, $G$ has exponent $p$. Thus by inspecting the five groups of order $p^3$ (namely
 $C_{p^3}$, $C_{p^2}\times C_p$, $C_p^3$ and the two extraspecial groups) we deduce that either
 $G\cong E$ or $G$ is elementary abelian.  By \cite[Main Theorem 2.6]{abelian} the latter is not
 possible.
\end{proof}

\begin{construction}
 \label{con:SUW}
 For $q=p^f$ with $f\geq 2$, let $U\oplus W$ be a decomposition of $\mathrm{GF}(q)$ into
 $\mathrm{GF}(p)$-subspaces and let $\{\alpha_1,\ldots,\alpha_k\}$ be a basis for $U$. Define
$$S_{U,W}=\langle R,\theta_{\alpha_1},\ldots,\theta_{\alpha_k},t_{0,0,w}\mid w\in W\rangle$$ Note
 that $S_{U,W}$ is independent of the choice of basis $\{\alpha_1,\alpha_2,\ldots,\alpha_k\}$ of $U$
 over $\mathrm{GF}(p)$ since, by Lemma \ref{lem:relations}(\ref{3}), $S_{U,W}$ contains
 $\theta_{\alpha}$ for all $\alpha\in U$.
\end{construction}

\begin{lemma}
\label{lem:SUW}
The group $S_{U,W}$ given by Construction \ref{con:SUW} has order $q^3$ and has the following properties:
\begin{enumerate}
 \item $S_{U,W}$ is nonabelian;
 \item for $q$ odd, $Z(S_{U,W})=Z(E)$ while 
$$(S_{U,W})'=\langle Z(E), t_{0,\alpha_1w_1+\cdots+\alpha_kw_k,0}\mid w_i\in W\rangle$$ which has
   order $qp^\ell$ where $\ell=\dim_{\mathrm{GF}(p)}(\alpha_1W+\cdots+\alpha_kW)$.
 \item for $q$ even, $Z(S_{U,W})=R$ and 
$$  (S_{U,W})' =\left\{ \begin{array}{ll}   \langle Z,t_{\alpha_1 w_1^2+\cdots+\alpha_kw_k^2,\alpha_1 w_1+\cdots+\alpha_kw_k,0}\mid w_i\in W\rangle & \text{ for } k\geq 3\\
            \langle t_{1,0,0},t_{\alpha_1 w_1^2+\alpha_2w_2^2,\alpha_1 w_1+\alpha_2w_2,0}\mid w_i\in W\rangle & \text{ for } k= 2\\
             \langle t_{\alpha_1 w_1^2,\alpha_1 w_1}\mid w_i\in W\rangle & \text{ for } k= 1  \end{array} \right.$$
\item for $p>3$, $S_{U,W}$ has exponent $p$;
 \item for $p=2,3$, $S_{U,W}$ has exponent $p^2$.
\end{enumerate}
\end{lemma}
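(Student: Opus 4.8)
The plan is to determine the group $S_{U,W}$ by analyzing its structure through the subgroup $R$, exactly paralleling the proof of Lemma \ref{lem:P}. First I would establish the order. The generators beyond $R$ are $\theta_{\alpha_1},\ldots,\theta_{\alpha_k}$ together with the $t_{0,0,w}$ for $w\in W$. By Lemma \ref{lem:relations}(\ref{1}) each $\theta_{\alpha_i}$ normalises $R$, and by the multiplication law for $E$ each $t_{0,0,w}$ normalises $R$ as well (indeed $t_{0,0,w}$ conjugates $t_{a,b,0}$ within $R$). By Lemma \ref{lem:relations}(\ref{3}) the cosets $R\theta_\alpha$ multiply additively in $\alpha$, so modulo $R$ the generators $\theta_{\alpha_i}$ contribute the subspace $U$ while the $t_{0,0,w}$ contribute $W$; since $U\oplus W=\mathrm{GF}(q)$, the quotient $S_{U,W}/R$ is isomorphic to the additive group of $\mathrm{GF}(q)$, of order $q$. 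Hence $|S_{U,W}|=q^2\cdot q=q^3$ and $(S_{U,W})'\leqslant R$.

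Next I would handle the centre and derived subgroup, treating $q$ odd and $q$ even separately. For $q$ odd, the key computation is that $R\theta_\alpha$-conjugation and $t_{0,0,w}$-conjugation both centralise $Z(E)=Z=\{t_{a,0,0}\}$ (from Lemma \ref{lem:relations}(\ref{1}) with the appropriate specialisation, and from the $E$-multiplication formula), while no element of $R\setminus Z$ is central; this gives $Z(S_{U,W})=Z(E)$. For the derived subgroup I would compute the relevant commutators: $[t_{a,b,c},\theta_\alpha]$ from Lemma \ref{lem:relations}(\ref{2}), the commutator $[\theta_\alpha,\theta_\beta]$ from Lemma \ref{lem:relations}(\ref{4}), and the mixed commutators involving $t_{0,0,w}$ computed via (\ref{eq:tcomms}). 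Collecting these, and using that conjugation by $\theta_{\alpha_i}$ sends $t_{0,0,w}$ to an element whose $b$-coordinate is $\alpha_i w$, shows $(S_{U,W})'$ is generated by $Z(E)$ together with the elements $t_{0,\alpha_1 w_1+\cdots+\alpha_k w_k,0}$, and since $S_{U,W}/(S_{U,W})'$ is then abelian this inclusion is an equality. The order formula $qp^\ell$ follows since the middle coordinates range over the $\mathrm{GF}(p)$-span $\alpha_1 W+\cdots+\alpha_k W$. For $q$ even one argues analogously, but now $R$ is central (as in Lemma \ref{lem:P}(5)), and the commutators pick up the squared terms $\alpha_i w_i^2$ in the first coordinate coming from Lemma \ref{lem:relations}(\ref{2}) and (\ref{4}); the three cases $k\geq 3$, $k=2$, $k=1$ differ only in how much of $Z$ the first coordinates already generate (cf.\ Lemma \ref{lem:field}).

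For the exponent claims, parts (4) and (5), I would argue precisely as in Lemma \ref{lem:P}: the group $S_{U,W}$ embeds in a Sylow $p$-subgroup of $\mathrm{GL}(4,q)$, so Lemma \ref{lem:GLsylowp} forces exponent $p$ when $p\geq 5$ and exponent at most $p^2$ when $p=2,3$; the lower bound for $p=2,3$ is witnessed by any $\theta_{\alpha_i}$, which has order $p^2$ by Corollary \ref{lem:thetaorder}. Finally, nonabelianity (part (1)) follows once we exhibit a single nontrivial commutator, e.g.\ $[t_{a,b,0},\theta_{\alpha_1}]$ with $b\neq 0$; since $f\geq 2$ guarantees a nonzero $\alpha_1$, this is nontrivial by Lemma \ref{lem:relations}(\ref{2}).

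The main obstacle I anticipate is the bookkeeping for $(S_{U,W})'$ in the even case, where the three subcases turn on whether the first-coordinate contributions $\alpha_i w_i^2$, combined with the cross terms $\alpha_i\beta_j(\alpha_i+\beta_j)$ among the $\theta$'s, already exhaust $Z$. Getting the generating sets exactly right — rather than just up to the centre — and verifying that the listed generators really do close up under commutation (so that the displayed subgroup is all of $(S_{U,W})'$ and not merely contained in it) is the delicate step; everything else is a direct transcription of the computations already recorded in Lemma \ref{lem:relations} and the argument of Lemma \ref{lem:P}.
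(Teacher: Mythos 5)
Your overall architecture is the same as the paper's: get the order from the fact that the $\theta_{\alpha_i}$ and the $t_{0,0,w}$ normalise $R$ with $S_{U,W}/R$ elementary abelian of order $q$, read off the centre and derived subgroup from the commutator formulas of Lemma \ref{lem:relations} (together with Lemma \ref{lem:field} in even characteristic), and get the exponent claims from Lemma \ref{lem:GLsylowp} plus the order-$p^2$ elements $\theta_{\alpha_i}$ when $p=2,3$. However, your proof of part (1) contains a genuine error. The witness commutator you propose, $[t_{a,b,0},\theta_{\alpha_1}]$ with $b\neq 0$, equals $t_{-2\alpha_1 b,0,0}$ by Lemma \ref{lem:relations}(\ref{2}), and in characteristic $2$ this is the identity; indeed, for $q$ even you yourself correctly note that $R\leqslant Z(S_{U,W})$, which flatly contradicts the claim that this commutator is nontrivial. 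So as written you have only proved nonabelianity for $q$ odd. The paper's witness works uniformly: $[\theta_{\alpha_i},t_{0,0,w}]$ with $w\in W\setminus\{0\}$ has middle coordinate $\alpha_i w\neq 0$ by Lemma \ref{lem:relations}(\ref{2}), hence is nontrivial in every characteristic (note this is where the hypothesis $W\neq 0$, implicit in Construction \ref{con:SUW}, is actually used).

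A second, smaller gap concerns the centre in the odd case: you establish $Z(E)\leqslant Z(S_{U,W})$ and that no element of $R\setminus Z$ is central, but this does not yet yield $Z(S_{U,W})=Z(E)$, since a central element could a priori lie in $(S_{U,W}\cap E)\setminus R$ (i.e.\ be of the form $t_{a,b,c}$ with $c\in W\setminus\{0\}$) or involve the $\theta_{\alpha_i}$ nontrivially. The paper closes the first possibility by noting $Z(S_{U,W}\cap E)=Z(E)$ and $C_E(\theta_{\alpha_i})=Z(E)$; some such step is needed in your argument as well, and likewise in the even case to get the reverse inclusion $Z(S_{U,W})\leqslant R$. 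Both defects are repairable with the computations already at your disposal, but they are real gaps, not stylistic differences.
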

\begin{proof}
Now $(\theta_{\alpha_i})^p\in R$ and by Lemma \ref{lem:relations}(\ref{1}), each $\theta_{\alpha_i}$
normalises $\left\langle R,t_{0,0,w} \mid w\in W\right\rangle$. Hence $|S_{U,W}|=q^3$. Since
$\theta_{\alpha_i}$ does not centralise elements $t_{0,0,w}$ for $w\in W\backslash\{0\}$, it follows
that $S_{U,W}$ is nonabelian. Hence (1) holds.

For $q$ odd we have that $Z(S_{U,W}\cap E)=Z(E)$. Since $C_E(\theta_{\alpha_i})=Z(E)$ it follows
that $Z(S_{U,W})=Z(E)$. For $q$ even we have that $S_{U,W}\cap E$ is elementary abelian. Moreover,
by Lemma \ref{lem:relations}(\ref{1}), $\theta_{\alpha_i}$ centralises $R$ and so $R\leqslant
Z(S_{U,W})$. Since $\theta_{\alpha_i}$ does not centralise any element of $S_{U,W}\cap E$ outside
$R$ it follows that $Z(S_{U,W})=R$.

By Lemma \ref{lem:relations}(\ref{1}),(\ref{3}) and (\ref{4}), $S_{U,W}/R$ is elementary abelian of
order $q$ and so $(S_{U,W})'\leqslant R$. For $q$ odd all elements of $Z(E)$ can be written as
commutators of $\theta_{\alpha_1}$ and elements of $R$ (Lemma \ref{lem:relations}(\ref{2})). Hence
$Z(E)\leqslant (S_{U,W})'$. Moreover, $[\theta_{\alpha_i},t_{0,0,w}]=t_{-\alpha_i(w^2+2\alpha_i
  w),\alpha_i w,0}$ for all $w\in W$. Thus
$$X=\langle Z(E), t_{0,\alpha_1w_1+\cdots+\alpha_kw_k,0}\mid w_i\in W\rangle\leqslant (S_{U,W})'.$$
By Lemma \ref{lem:relations}(\ref{1}) and (\ref{3}), $S_{U,W}/X$ is abelian and so $(S_{U,W})'=X$,
which has order $qp^\ell$. Thus (2) holds.

For $q$ even, by Lemma \ref{lem:relations}(\ref{2}) and (\ref{4}),
$[\theta_{\alpha_i},t_{0,0,w}]=t_{\alpha_i w^2,\alpha_i w,0}$ and
$[\theta_{\alpha_i},\theta_{\alpha_j}]=t_{\alpha_i\alpha_j(\alpha_i-\alpha_j),0,0}$. Thus
$$X=\langle t_{\alpha\beta(\alpha+\beta),0,0},t_{\alpha_1 w_1^2+\cdots+\alpha_kw_k^2,\alpha_1
  w_1+\cdots+\alpha_kw_k,0}\mid w_i\in W,\alpha,\beta\in\mathrm{GF}(q)\rangle \leqslant (S_{U,W})'$$
Moreover, by Lemma \ref{lem:relations}(\ref{1}),
$\theta_{\alpha_i}^{-1}t_{a,b,c}\theta_{\alpha_i}=t_{a,\alpha_i c+b, c}=t_{\alpha_i c^2,\alpha_i
  c,0}t_{a,b,c}$. Since $E$ is abelian and $t_{a,b,c}\in S_{U,W}$ if and only if $c\in W$, it
follows that $S_{U,W}/X$ is elementary abelian. Hence $(S_{U, W})'=X$.  The expression for
$(S_{U,W})'$ given in (3) then follows from Lemma \ref{lem:field}.

Since $S_{U,W}$ is contained in a Sylow $p$-subgroup of $\mathrm{GL}(4,q)$, Lemma \ref{lem:GLsylowp}
implies that the exponent of $S_{U,W}$ is $p$ for $p\geq 5$ and at most $p^2$ for $p=2,3$. For
$p=2,3$, each $\theta_{\alpha_i}\in S_{U,W}$ has order $p^2$ and so the exponent of $S_{U,W}$ is
indeed $p^2$. Hence (4) and (5) hold.
\end{proof}

\begin{corollary}
\label{cor:SUPnotPE}
 Let $U$ be a 1-dimensional subspace of $\mathrm{GF}(q)$ over $\mathrm{GF}(p)$. Then $P\not\cong
 S_{U,W}\not\cong E$.
\end{corollary}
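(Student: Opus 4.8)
The plan is to separate $S_{U,W}$ from both $E$ and $P$ by producing, for each parity of $q$, an isomorphism invariant on which the groups disagree. Since $\dim_{\mathrm{GF}(p)}U=1$ we are in the case $k=1$ of Lemma~\ref{lem:SUW}, and because $U\oplus W=\mathrm{GF}(q)$ with $f\ge 2$ we have $\dim_{\mathrm{GF}(p)}W=f-1\ge 1$. I would feed these two facts into the structural descriptions of $Z(S_{U,W})$ and $(S_{U,W})'$ in Lemma~\ref{lem:SUW}, and compare against the descriptions of $E$ and $P$ in Lemmas~\ref{lem:E} and~\ref{lem:P}; an isomorphism must carry centre to centre and derived subgroup to derived subgroup, so any discrepancy in these (or in coarser counts) settles the matter.

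For $q$ odd the decisive invariant is the comparison of $|Z|$ with $|G'|$. By Lemma~\ref{lem:E}(3),(4) and Lemma~\ref{lem:P}(4) both $E$ and $P$ satisfy $Z(G)=G'=Z(E)$, a group of order $q$, so $|Z(G)|=|G'|$. By contrast Lemma~\ref{lem:SUW}(2) gives $Z(S_{U,W})=Z(E)$ of order $q$ while $(S_{U,W})'$ has order $qp^{\ell}$ with $\ell=\dim_{\mathrm{GF}(p)}(\alpha_1 W)$; since multiplication by $\alpha_1\ne 0$ is a bijective $\mathrm{GF}(p)$-linear map of $\mathrm{GF}(q)$, $\ell=\dim_{\mathrm{GF}(p)}W=f-1\ge 1$, whence $|Z(S_{U,W})|=q<qp^{f-1}=|(S_{U,W})'|$. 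The equality $|Z|=|G'|$ for $E$ and $P$ against the strict inequality for $S_{U,W}$ rules out $E\cong S_{U,W}$ and $P\cong S_{U,W}$.

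For $q$ even, $E$ is abelian by Lemma~\ref{lem:E}(2) while $S_{U,W}$ is nonabelian by Lemma~\ref{lem:SUW}(1), so $E\not\cong S_{U,W}$ at once, and only $P\not\cong S_{U,W}$ remains. Here I would count the solutions of $g^2=1$. Every element of $P$ is uniquely $t_{a,b,0}\theta_{\gamma}$ with $\gamma\in\mathrm{GF}(q)$; since $\theta_{\gamma}$ centralises $R$ when $q$ is even (Lemma~\ref{lem:relations}(\ref{1})) and $t_{a,b,0}^2=1$, one gets $(t_{a,b,0}\theta_{\gamma})^2=\theta_{\gamma}^2=t_{\gamma^3,\gamma^2,0}$, which is trivial exactly when $\gamma=0$. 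Thus the involutions of $P$ together with the identity form precisely the central subgroup $R$, giving $q^2$ solutions of $g^2=1$. But $S_{U,W}$ contains the subgroup $T=\{t_{a,b,c}\mid a,b\in\mathrm{GF}(q),\ c\in W\}\le E$, which is elementary abelian of order $q^2|W|=q^2 2^{f-1}>q^2$. Hence $S_{U,W}$ has strictly more involutions than $P$, so $P\not\cong S_{U,W}$.

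The main obstacle is the even case, and within it the value $q=4$: there $|P'|=|(S_{U,W})'|=2$, the centres coincide with $R$, and both exponents equal $4$, so the derived-subgroup orders (which already separate $P$ from $S_{U,W}$ once $q\ge 8$, where $|P'|=q>q/2=|(S_{U,W})'|$) give no information. The involution count resolves all even $q$ uniformly, and the delicate point to verify is that $P$ has \emph{no} involutions outside $R$, i.e.\ that each noncentral $t_{a,b,0}\theta_{\gamma}$ genuinely squares to the nontrivial element $\theta_{\gamma}^2=t_{\gamma^3,\gamma^2,0}$; this in turn rests on $\theta_{\gamma}$ commuting with all of $R$ in characteristic $2$.
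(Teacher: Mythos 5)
Your proof is correct, and in the even case it takes a genuinely different route from the paper's. For odd $q$ the two arguments coincide in substance: the paper disposes of this case by comparing derived-subgroup orders ($|E'|=|P'|=q$ while $|(S_{U,W})'|=qp^{\ell}$ with $\ell=\dim(W)=f-1\geq 1$), and your comparison of $|Z(G)|$ with $|G'|$ is the same data repackaged. The divergence is for even $q$. There the paper again compares derived subgroups, which separates the groups only for $q\geq 8$ (where $|P'|=q$ but $|(S_{U,W})'|=q/2$), and it settles $q=4$ --- where both derived subgroups have order $2$ --- by a Magma computation. You instead count solutions of $g^2=1$: exactly $q^2$ in $P$, because each element is uniquely of the form $t_{a,b,0}\theta_{\gamma}$ (Lemma \ref{lem:P}(1) plus $|P|=q^3$), $\theta_{\gamma}$ centralises $R$ in characteristic $2$ (Lemma \ref{lem:relations}(1)), and so $(t_{a,b,0}\theta_{\gamma})^2=\theta_{\gamma}^2=t_{\gamma^3,\gamma^2,0}$, which is nontrivial whenever $\gamma\neq 0$; whereas $S_{U,W}$ contains the elementary abelian subgroup $\{t_{a,b,c}\mid a,b\in\mathrm{GF}(q),\ c\in W\}$ of order $q^2 2^{f-1}>q^2$. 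Since isomorphic groups have the same number of solutions of $g^2=1$, this gives $P\not\cong S_{U,W}$ for every even $q$ at once. What your approach buys is a uniform, computer-free treatment of the even case, in particular replacing the paper's Magma verification at $q=4$; what the paper's approach buys is brevity, at the cost of an invariant (derived-subgroup order) that fails precisely at $q=4$, which you correctly identified as the delicate value and resolved by hand.
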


\begin{proof}
This follows for $q>4$ by comparing the orders of the derived subgroups. Note that
$\ell=\dim(W)=f-1$. A Magma \cite{magma} calculation verifies the result for $q=4$.
\end{proof}

\begin{remark}\label{rem:moreregularsubs}
Note that $|S_{U,W}\cap E|=q^2p^{f-k}$. Since $E\vartriangleleft
\Gamma\mathrm{Sp}(4,q)_{\textbf{x}}$ it follows that if $\dim(U_1)\neq\dim(U_2)$ then $S_{U_1,W_1}$
is not conjugate to $S_{U_2,W_2}$ in $ \Gamma\mathrm{Sp}(4,q)_{\textbf{x}}$. However, if
$\dim(U_1)=\dim(U_2)$ it is possible for $S_{U_1,W_1}$ to still not be conjugate to
$S_{U_2,W_2}$. For example, when $q=8$, Magma \cite{magma} calculations show that there are two
conjugacy classes of subgroups $S_{U,W}$ with $U$ a 2-dimensional subspace.

As for isomorphism classes, sometimes it can be read off from the order of derived subgroups that
two such groups are nonisomorphic. For example when $q=8$ comparing orders of derived subgroups
yields $S_{U,W}\not\cong S_{W,U}$ when $U$ is a 1-space. Moreover, if $U$ is a 2-space then
$S_{U,W}\not\cong P$ even though they have derived subgroups of the same order. It is even possible
for $S_{U_1,W_1}\not\cong S_{U_2,W_2}$ when $\dim(U_1)=\dim(U_2)$. Indeed for $q=16$, Magma
\cite{magma} calculations show that there are two isomorphism classes of subgroups $S_{U,W}$ with
$U$ a 3-dimensional subspace: one has Frattini subgroup of order $2^7$ and one has Frattini subgroup
of order $2^8$.
\end{remark}

\begin{lemma}
\label{lem:SUWregular}
The group $S_{U,W}$ acts regularly on the set of points of $\mathsf{W}(3,q)$ not collinear with
$\textbf{x}$. Moreover, $S_{U,W}$ fixes the line $\langle (1,0,0,0),(0,1,0,0)\rangle$ but permutes
the remaining $q$ lines through $\textbf{x}$ in orbits of length $p^k$.
\end{lemma}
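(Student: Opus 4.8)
The plan is to follow the template of Lemma~\ref{lem:Pregular}, using the direct sum $\GF(q)=U\oplus W$ to keep the contribution of the $\theta_{\alpha_i}$ (which lands in $U$) separate from that of the $t_{0,0,w}$ (which lands in $W$). First I would set $E_W:=\langle R, t_{0,0,w}\mid w\in W\rangle=\{t_{a,b,c}\mid a,b\in\GF(q),\ c\in W\}$ and record a normal form. By Lemma~\ref{lem:relations}(\ref{1}) each $\theta_{\alpha_i}$ normalises $E_W$, so $E_W\vartriangleleft S_{U,W}$; since the powers $\theta_{\alpha_i}^p$ lie in $R$ and $\theta_\alpha\theta_\beta\in R\theta_{\alpha+\beta}$ by Lemma~\ref{lem:relations}(\ref{3}), every $g\in S_{U,W}$ can be written as $g=t_{a,b,c}\,\theta_{\alpha_1}^{n_1}\cdots\theta_{\alpha_k}^{n_k}$ with $a,b\in\GF(q)$, $c\in W$ and $0\le n_i<p$; comparing the number of such expressions with $|S_{U,W}|=q^3$ from Lemma~\ref{lem:SUW} shows this normal form is in fact unique.

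Second, the crux is the third coordinate of $(0,0,0,1)^g$, that is, the $(4,3)$-entry of the matrix $g$. Since every factor is lower unitriangular (so the $(4,3)$-entry of a product is the sum of those of the factors), this entry equals $c+n_1\alpha_1+\cdots+n_k\alpha_k$, the factor $t_{a,b,c}$ contributing $c$ and each $\theta_{\alpha_i}^{n_i}$ contributing $n_i\alpha_i$. Take $\textbf{y}:=\langle(0,0,0,1)\rangle$, a point not collinear with $\textbf{x}$, and suppose $g\in(S_{U,W})_{\textbf{y}}$. Then $g$ fixes $\textbf{y}$, forcing this entry to vanish: $c+\sum_i n_i\alpha_i=0$ with $c\in W$ and $\sum_i n_i\alpha_i\in U$. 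Here the splitting $U\oplus W=\GF(q)$ does the work, as $U\cap W=0$ gives $c=0$ and $\sum_i n_i\alpha_i=0$; linear independence of the $\alpha_i$ over $\GF(p)$ then yields $n_i\equiv 0\pmod p$, hence $n_i=0$ for all $i$. Thus $g=t_{a,b,0}\in R$, and since $R$ acts semiregularly on the $q^3$ points not collinear with $\textbf{x}$ (Remark~\ref{remark:Eiselation}), $g=1$. As $S_{U,W}\le\Sp(4,q)_{\textbf{x}}$ preserves this set of $q^3$ points and $|S_{U,W}|=q^3$, the trivial stabiliser at $\textbf{y}$ forces the orbit of $\textbf{y}$ to be the whole set, and regularity follows.

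Finally, for the lines through $\textbf{x}$ I would pass to $\textbf{x}^\perp/\textbf{x}$, a $2$-space with basis the images $\bar e_2,\bar e_3$ of $(0,1,0,0),(0,0,1,0)$, whose $q+1$ one-dimensional subspaces are exactly the lines through $\textbf{x}$. A direct matrix check shows $R$ and each $t_{0,0,w}$ act trivially on this quotient, while $\theta_{\alpha_i}$ induces $\left(\begin{smallmatrix}1&0\\\alpha_i&1\end{smallmatrix}\right)$ (as already observed in Lemma~\ref{lem:Pregular}); hence the image of $S_{U,W}$ in $\GL(\textbf{x}^\perp/\textbf{x})$ is $\{\left(\begin{smallmatrix}1&0\\u&1\end{smallmatrix}\right)\mid u\in U\}$, of order $p^k$. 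This fixes $\langle\bar e_2\rangle$, which corresponds to the line $\langle(1,0,0,0),(0,1,0,0)\rangle$, and acts on the remaining lines $\langle\bar e_3+\mu\bar e_2\rangle$ $(\mu\in\GF(q))$ by the translation $\mu\mapsto\mu+u$; the orbits are therefore the cosets of $U$ in $\GF(q)$, each of length $p^k$. The only genuine subtlety is the bookkeeping with the direct-sum decomposition that makes the vanishing $(4,3)$-entry split cleanly into its $U$- and $W$-parts; the remainder is a routine adaptation of Lemma~\ref{lem:Pregular}.
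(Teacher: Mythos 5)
Your proposal is correct and follows essentially the same route as the paper's proof: write $g=t_{a,b,c}\theta_{\alpha_1}^{n_1}\cdots\theta_{\alpha_k}^{n_k}$, read off the third coordinate of $(0,0,0,1)^g$ as $c+\sum_i n_i\alpha_i$, use $U\cap W=0$ and linear independence of the $\alpha_i$ to force $g\in R$, invoke semiregularity of $R$, and then analyse the induced transvections on $\textbf{x}^\perp/\textbf{x}$ for the line orbits. The only difference is that you make explicit several points the paper leaves implicit (uniqueness of the normal form, additivity of the $(4,3)$-entry for lower unitriangular matrices, and the identification of line orbits with cosets of $U$), which is fine but not a different argument.
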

\begin{proof}
Consider the image of $\textbf{y}=\langle (0,0,0,1)\rangle$ under $g\in S_{U,W}$. Then
$g=t_{a,b,c}\theta_{\alpha_1}^{n_1}\theta_{\alpha_2}^{n_2}\ldots\theta_{\alpha_k}^{n_k}$ for some
$t_{a,b,c}\in E\cap S_{U,W}$ and some integers $n_1$. The third coordinate of $(0,0,0,1)^g$ is equal
to $n_1\alpha_1+\cdots +n_k\alpha_k+c$ where $c\in W$.  Thus if $g\in (S_{U,W})_{\textbf{y}}$ then
$n_1\alpha_1+\cdots +n_k\alpha_k+c=0$. Since $\{\alpha_1,\ldots,\alpha_k,c\}$ is a linearly
independent set over $\mathrm{GF}(p)$ it follows that $p$ divides each $n_i$. Hence each
$\theta_{\alpha_i}^{n}\in R$ and so $g\in R_{\textbf{y}}$. However, $R$ acts semiregularly on the
set of points not collinear with $\textbf{x}$. Thus $(S_{U,W})_{\textbf{y}}=1$.

Since $\theta_{\alpha}$ induces $\begin{pmatrix}
                                        1&0\\\alpha&1
                                       \end{pmatrix}$ 
on $\textbf{x}^{\perp}/\textbf{x}$ it follows that $S_{U,W}$ fixes one line through $\textbf{x}$ and
permutes the remaining $q$ in orbits of size $p^k$.
\end{proof}

\begin{proof}[Proof of Theorem \ref{thm:secondthm}]
The group $E$ is the group defined in (\ref{eqE}), the group $P$ is as provided by Construction
\ref{con:P} and the group $S$ can be taken to be $S_{U,W}$ given by Construction \ref{con:SUW} with
$U$ a 1-dimensional subspace of $\mathrm{GF}(q)$ over $\mathrm{GF}(p)$. The theorem then follows
from Lemmas \ref{lem:E},\ref{lem:P}, \ref{lem:Pregular}, \ref{lem:SUW} and \ref{lem:SUWregular}, and
Corollaries \ref{cor:PnotE} and \ref{cor:SUPnotPE}.
\end{proof}

\section{Small generalised quadrangles}
\label{sec:smallGQ}

In general, the generalised quadrangle $\mathcal{Q}^x$ of order $(q-1,q+1)$ obtained by Payne
deriving $\mathsf{W}(3,q)$ has more regular subgroups than those exhibited in Section
\ref{sec:paynederived}. In this section we catalogue the point regular groups of automorphisms of
small generalised quadrangles. The results suggest that the problem is wild.

For $q=3$ we have $\mathcal{Q}^x\cong \mathsf{Q}^-(5,2)$ and so the point regular
groups of automorphisms are given by Theorem \ref{MainTheorem}. They are simply conjugates of the
groups $E$ and $P$ from Section \ref{sec:paynederived}.

For $q=4$ the full automorphism group of $\mathcal{Q}^x$ is $C_2^6\rtimes (3.A_6.2)$, which acts
transitively on the lines of $\mathcal{Q}^x$ (see \cite[\S V]{payne90}. Hence the dual of $\mathcal{Q}^x$ is a generalised
quadrangle of order $(5,3)$ with a point-transitive automorphism group.

\begin{example}
\label{eg:35}
Let $\mathcal{Q}^x$ be the generalised quadrangle of order $(3,5)$ obtained by Payne derivation from
$\mathsf{W}(3,4)$. A Magma \cite{magma} calculation\footnote{Our use of the computer was not
  complicated. We simply used the command \texttt{Subgroups(G:IsRegular)}, for the most part.}
reveals that $\Aut(\mathcal{Q}^x)$ has 58 conjugacy classes of regular subgroups with 30 different
isomorphism classes occuring.  In Table \ref{tab:qeq4} we document the number in the Small Group
Database of Magma of each isomorphism class and the number of conjugacy classes (indicated by the
symbol $\#$) of regular subgroups of that isomorphism type. We also give information about various
groups in the list and identify $E$, $P$ and the $S_{U,W}$. In this case all the $S_{U,W}$ are
conjugate in $\mathrm{Sp}(4,4)_{\textbf{x}}$. We note that $E$ is normal in $\Aut(\mathcal{Q}^x)$
and is the only abelian regular subgroup. The groups occuring have nilpotency class 1, 2, 3 or
4. Note that the list of regular groups includes a group isomorphic to a Sylow 2-subgroup of
$\mathrm{GL}(3,4)$ so it is possible for a Heisenberg group of even order to act regularly on the
points of a generalised quadrangle. This was previously believed to not be possible
\cite[p241]{conjecture}.
\end{example}

\begin{table}
 \caption{The isomorphism types of regular subgroups of the generalised quadrangle of order $(3,5)$}
\label{tab:qeq4}
\begin{tabular}{ccl||ccl||ccp{4cm}}
Group & \# & comment &Group & \# &comment &Group & \#& comment \\
\hline
 9& 1    &   &   90& 7     & &       215& 1& special \\
 18& 1   &   & 92& 1      &  &    219& 1 &special\\
 23& 6   &   &   102& 1    & &          224& 1 & special\\
 32& 5   &   &  136& 2     & &          226& 1 &special, $D_8\times D_8$\\
 33& 3  &    &  138& 2   &   &         227& 1 &special\\
 34& 1  &    &139& 2       & &          232& 2 &special\\
 35& 4   &   &193& 1       &$S_{U,W}$ &          241& 1 & special\\
 56& 1    & $P$  &   199& 1  &   &         242& 1 &special, Sylow 2-subgroup of $\mathrm{GL}(3,4)$\\
60& 4     &  &  202& 2  &   &         264& 1 &\\
 88& 1    &  &   206& 1   &  &        267& 1 & $E$
\end{tabular}
\end{table}

\begin{example}
Let $\mathcal{Q}$ be the generalised quadrangle of order $(5,3)$, the dual of the generalised
quadrangle of order $(3,5)$ in Example \ref{eg:35}.  Then $\mathcal{Q}$ has 96 points and 64 lines
and has automorphism group $C_2^6\rtimes (3.A_6.2)$ (see \cite{payne90}). It is known that $\Aut(\mathrm{Q})$ contains a
regular subgroup on points \cite[p46]{ow}.  In fact the automorphism group contains 6 different
conjugacy classes of regular subgroups on points (by a Magma calculation). They have shape as given
in Table \ref{tab:53}. By $2^{a+b}$ we mean a 2-group $P$ with center an elementary abelian group of
order $2^a$ and $P/Z(P)$ is elementary abelian of order $2^b$.

\begin{table}
 \caption{Regular subgroups of the generalised quadrangle of order $(5,3)$}
\label{tab:53}
\begin{center}
\begin{tabular}{l|ll}
Group & Shape & Notes\\
\hline
$H_1$ & $C_2^4\rtimes S_3$ & $Z(H_1)=1$, $H_1'=C_2^4\rtimes C_3$\\
$H_2$ & $2^{2+3}\rtimes C_3$& $Z(H_2)=1$, $H_2'=C_2^4$ \\
$H_3$ & $2^{2+3}\rtimes C_3$& $Z(H_3)=1$, $H_3'=C_4^2$ \\
$H_4$ & $C_4^2\rtimes S_3$ & $Z(H_4)=1$, $H_4'=C_4^2\rtimes C_3$\\
$H_5$ & $C_2^4\rtimes S_3$ & $|Z(H_5)|=2$, $H_5'=C_2^3\rtimes C_3$\\
$H_6$ & $2^{2+2}\rtimes S_3$& $|Z(H_6)|=2$, $H_6'=Q_8\rtimes C_3$\\
 \end{tabular}
\end{center}
\end{table}
\end{example}

For $q\geq 5$, the full automorphism group of $\mathcal{Q}^x$ is
$\mathrm{P}\Gamma\mathrm{Sp}(4,q)_{\textbf{x}}$ \cite{GJS}, which is not transitive on the lines of
$\mathcal{Q}^x$. In Table \ref{tab:smallq} we list, for certain values of $q$, the number of conjugacy classes of
point regular subgroups of $\Aut(\mathcal{Q})$ where $\mathcal{Q}$ is the generalised quadrangle of
order $(q-1,q+1)$ obtained from $\mathsf{W}(3,q)$ obtained by Payne derivation.

\begin{table}
\caption{Numbers of conjugacy classes of point regular subgroups of the Payne derived generalised quadrangle from $W(3,q)$}
\label{tab:smallq}
\begin{tabular}{ll|l}
$q$ & $\#$ regular subgroups & comments\\
\hline
2 & 4 & $2^3$, $C_4\times C_2$, 2 $D_8$'s,  conjugacy in $\mathrm{P}\Gamma\mathrm{Sp}(4,2)_x$\\
3 & 2 & this is $\mathsf{Q}^-(5,2)$ \\
4 & 58& 30 isomorphism classes\\
5 & 2 & $E$ and $P$.\\
7 & 2 & $E$ and $P$\\
8& 14 &  8 isomorphism types, nilpotency class 1 or 2\\
 &   & 2 conjugacy classes of subgroups isomorphic to $P$\\
 &    & 1 conjugacy class of $S_{U,W}$ with $U$ a 1-space\\
 &    & 2 conjugacy classes (but 1 isomorphism class) of $S_{U,W}$ with $U$ a 2-space\\
 &    & 2 further conjugacy classes of groups isomorphic to $S_{U,W}$ with $U$ a 2-space\\
9&  5& distinct isomorphism types, nilpotency class 2 or 3\\
 &   & 1 class of $S_{U,W}$\\
11& 2& $E$ and $P$\\
13& 2& $E$ and $P$\\
16& 231 &  1 conjugacy class of $S_{U,W}$ for $U$ a 1-space\\
  &     & 2 isomorphism (and conjugacy) classes of $S_{U,W}$ for $U$ a 3-space\\
  &     & 10 conjugacy classes of $S_{U,W}$ for $U$ a 2-space (all isomorphic)\\
  &     & nilpotency classes 1, 2, 3, 4, 5, 6 and 7.\\
17& 2& $E$ and $P$\\
19& 2& $E$ and $P$ \\
23& 2& $E$ and $P$\\
25 & 7 &  nilpotency class 2 or 3\\
     & &1 conjugacy class of $S_{U,W}$
\end{tabular}
\end{table}

\begin{example}
Let $\mathcal{Q}$ be the generalised quadrangle of order $(15,17)$ arising from the Lunelli-Sce
hyperoval. Then $\mathcal{Q}$ has 4096 points and 4608 lines. It follows from \cite{bms} that its
automorphism group is isomorphic to $G=2^{12}\rtimes H$ where $H$ is the stabiliser in
$\Gamma\mathrm{L}(3,16)$ of the hyperoval. The group $H$ has shape $(3_+^{1+2}\times C_5)\rtimes
(C_8\times C_2)$.

A Magma \cite{magma} calculation shows that the group $G$ contains 54 conjugacy classes of groups
regular on points. This includes the elementary abelian 2-group which is the socle of $G$. There are
16 isomorphism types of groups and one further conjugacy class of subgroups for which we are unable
to determine whether they are isomorphic to any of the former $16$ types. There are no special
2-groups on the list. The nilpotency classes of the groups are 1, 2, 3, 4 and 7. There are two
conjugacy classes of groups of nilpotency class 7 and all such groups are isomorphic. Every member
of this isomorphism class has centre of order 2, derived subgroup of order $2^7$ and exponent 16.

It is proved in \cite[p46]{ow} that the generalised quadrangle of order $(17,15)$ arising as the
dual of $\mathcal{Q}$ has no point regular groups of automorphisms. This was confirmed by computer
calculations.
\end{example}

\section{Conjectures in the literature}
\label{sec:conjectures}

We note that the second and third examples of Theorem \ref{MainTheorem} appear to have been
overlooked in the classsification of subgroups of $\mathrm{P\Gamma U}(4,q)$ transitive on lines
given in \cite[Corollary 5.12]{KantLieb}.

In \cite{conjecture} the authors prove 

\begin{theorem}
\label{thm:TdW}
Let $\mathcal{Q}$ be a generalised quadrangle of order $(s, t)$ admitting a point regular group $G$,
where $G$ is a $p$-group and $p$ is odd. Suppose $|Z(G)|\geq \sqrt[3]{|G|}$. Then the following
properties hold.
\begin{enumerate}
 \item We have $t = s + 2$, and there is a generalised quadrangle $\mathcal{Q}'$ of order $s + 1$ with a regular
point $x$, such that $\mathcal{Q}$ is Payne derived from $\mathcal{Q'}$ with respect to $x$. The
generalised quadrangle $\mathcal{Q}'$ is an elation generalised quadrangle with  elation group $K$ isomorphic to $G$.
\item We have $|Z(G)| = \sqrt[3]{|G|}$, that is, $|Z(G)| = s + 1.$
\end{enumerate}
\end{theorem}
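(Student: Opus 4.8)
The plan is to exploit the point-regular action to turn the geometry into group theory, and then to recognise the reverse of Payne's construction through the central subgroup. First I would fix the base point as the identity of $G$ and identify the point set of $\mathcal{Q}$ with $G$, so that collinearity becomes a left-invariant relation and the point graph is the Cayley graph $\mathrm{Cay}(G,D)$ for a symmetric inverse-closed set $D$ with $|D|=s(t+1)$; the $t+1$ lines through the identity partition $D$ into $t+1$ cliques of size $s$, and $|G|=(s+1)(st+1)$. Throughout, the hypothesis that $G$ is a $p$-group with $p$ odd will be used to control the $p$-power structure of $|G|$ and, crucially, the orbits of cyclic central subgroups.

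Next I would study the centre $Z=Z(G)$. Each $z\in Z$ gives a fixed-point-free automorphism $\tau_z$ (left translation) that centralises $G$ and fixes every coset $Zg$ setwise; hence the partition of the points into $Z$-cosets is $G$-invariant and equitable, and collinearity inside a coset is modelled uniformly by $\mathrm{Cay}(Z,D\cap Z)$. The key structural claim is that the central orbits are cliques lying on lines: I would show that $Z_0:=\langle D\cap Z\rangle$ satisfies $Z_0\setminus\{1\}\subseteq D$ and is a clique, so that $Z_0$ lies on a line and $|Z_0|\le s+1$, and that the $Z_0$-invariant lines form a spread $\mathcal{S}$ of $\mathcal{Q}$ fixed line-by-line by $Z_0$. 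Here oddness of $p$ is essential: for a central element $z$ of order $p$ the orbit $\{g,zg,\dots,z^{p-1}g\}$ is a $p$-cycle in the collinearity graph, and one must rule out the possibility that it is a genuine induced cycle rather than a clique, using the no-triangle axiom together with the count of $t+1$ common neighbours of a non-collinear pair.

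With the spread $\mathcal{S}$ and the central symmetries in hand, $Z_0$ is naturally a group of symmetries about a point $x$ that is to be adjoined, the lines of $\mathcal{S}$ playing the role of the hyperbolic lines $\{x,y\}^{\perp\perp}$. I would then bring in the quantitative hypothesis $|Z|\ge\sqrt[3]{|G|}$. Combined with the clique bound $|Z_0|\le s+1$ and the coset counting forced by the equitable partition, it forces the configuration to be tight: $Z_0=Z$, each line of $\mathcal{S}$ is a single $Z$-orbit (so $Z$ is transitive on it and $|Z|=s+1$), and $x$ is a \emph{centre of symmetry} with full symmetry group $Z$ of order $s+1$. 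At this point one invokes the recognition theorem for the reverse of Payne derivation: a generalised quadrangle possessing such a centre of symmetry is Payne derived from a generalised quadrangle $\mathcal{Q}'$ of order $(s+1,s+1)$ with respect to the regular point $x$, and in particular $t=s+2$ and $|G|=(s+1)^3$. Then $|Z|=s+1=\sqrt[3]{|G|}$, which gives part (2) and the numerical part of (1).

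Finally, to identify the elation group, I would note that $G$ itself, acting on $\mathcal{Q}=\mathcal{Q}'^x$, extends to a group of automorphisms of $\mathcal{Q}'$ fixing $x$ and every line through $x$, and acting regularly on the points of $\mathcal{Q}'$ not collinear with $x$; this is exactly the statement that $\mathcal{Q}'$ is an elation generalised quadrangle with elation group $K=G$, so $K\cong G$. The main obstacle is the middle step: proving that the centre really does furnish a centre of symmetry of full order $s+1$ — that is, that the central orbits are complete lines forming a symmetry spread and that the hypothesis on $|Z|$ forces transitivity and equality rather than a proper sub-configuration. This rigidity is where both the oddness of $p$ (which excludes the even-order abelian behaviour of the De Winter--Thas theorem) and the precise generalised quadrangle axioms enter; once it is established, the determination of the parameters and the reverse-Payne recognition are comparatively routine.
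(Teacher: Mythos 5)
You should first be aware of the status of this statement in the paper: it is \emph{not} proved there at all. Theorem \ref{thm:TdW} is quoted from De Winter and Thas \cite{conjecture}, and the entire point of Section \ref{sec:conjectures} is that the constructions of Section \ref{sec:paynederived} \emph{contradict} it. So no blind proof attempt can be correct, because the statement is false. Concretely, take $q=3$ and $G=P$ from Construction \ref{con:P}: by Lemma \ref{lem:P}, $P$ is extraspecial of order $27$ and exponent $9$, so $p=3$ is odd and $|Z(P)|=3=\sqrt[3]{|P|}$, and by Lemma \ref{lem:Pregular} it acts regularly on the points of the Payne derived quadrangle of order $(2,4)$ (which is $\mathsf{Q}^-(5,2)$). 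Part (1) would then require a generalised quadrangle of order $3$ that is an elation generalised quadrangle with elation group isomorphic to $P$; but the only generalised quadrangle of order $3$ is $\mathsf{W}(3,3)$ \cite[\S 6.2]{paynethas}, whose elation group is the Heisenberg group of exponent $3$, not a group of exponent $9$.

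It is worth locating exactly where your argument breaks, because most of it actually survives this counterexample: $\mathcal{Q}$ \emph{is} Payne derived from $\mathcal{Q}'=\mathsf{W}(3,3)$ at a regular point $x$, the centre $Z(P)$ \emph{does} act as the full group of symmetries about $x$ of order $s+1$, and $|Z(G)|=s+1$ holds, so your middle ``rigidity'' step and part (2) are consistent with the example. The step that fails is the final one, which you state in a single sentence without justification: that $G$, extended to a group of automorphisms of $\mathcal{Q}'$ fixing $x$, ``fixes every line through $x$'' and hence is an elation group. Point-regularity on $\mathcal{Q}'^x$ gives no control whatsoever over how the extension permutes the lines of $\mathcal{Q}'$ through $x$: by Lemma \ref{lem:Pregular}, the group $P$ fixes exactly one line through $\textbf{x}$ and transitively permutes the remaining $q$. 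So the extension is a point-regular group fixing $x$ that is \emph{not} an elation group, and (as the $q=3$ case shows) one cannot in general replace it by an isomorphic group that is one. This unjustified clause is precisely the part of the cited theorem that the paper identifies as failing, so the gap in your proposal is not repairable.
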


For $q$ odd, the groups $P$ and $S$ from Theorem \ref{thm:secondthm} satisfy the hypotheses of
Theorem \ref{thm:TdW} with $\mathcal{Q}$ being a generalised quadrangle of order $(q-1,q+1)$. Hence
there should be generalised quadrangles of order $q$ with elation groups $K$ and $K'$ isomorphic to
$P$ and $S$ respectively. However, the only generalised quadrangle of order $3$ is $\mathsf{W}(3,3)$
\cite[\S 6.2]{paynethas} and this does not have an elation group isomorphic to $P$.  The theorem
seems to also fail with respect to $P$ for larger values of $q$.

Thus we also have a counterexample to the following conjecture of \cite{conjecture}.
\begin{quote}\textit{
\textbf{Conjecture 1:} If $\mathcal{Q}$ is a generalised quadrangle admitting a point regular group
of automorphisms $G$, then there exists an elation generalised quadrangle $\mathcal{Q}'$ of order
$s$ with elation group $G'$, such that $\mathcal{Q}$ can be obtained from $\mathcal{Q}'$ by Payne
derivation with respect to $x$, and such that $G\cong G'$ .  }
\end{quote}

In \cite{conjecture} the authors also make the following conjecture

\begin{quote}\textit{
\textbf{Conjecture 2:} If a finite generalised quadrangle admits a point regular group of
automorphisms $G$, such that $G$ is a $p$-group, $p$ odd, with the property that
$|\mathsf{Z}(G)|\geqslant \sqrt[3]{|G|}$, then $G$ is isomorphic to a Heisenberg group of dimension
3 over $\mathrm{GF}(q )$, where $q$ is a power of $p$.}
\end{quote}
Moreover, in \cite[Conjecture 4.4.1]{ow} the following more general conjecture is made:

\begin{quote}\textit{
\textbf{Conjecture 3:} If a finite thick generalised quadrangle $\mathcal{Q}$ admits a group of
automorphisms $G$ which acts regularly on the set of points, then either $\mathcal{Q}$ is the
generalised quadrangle of order $(5, 3)$, or $G$ is $(1)$ an elementary abelian 2-group, or $(2)$ an
odd order Heisenberg group, and in (1)-(2) $\mathcal{Q}$ is a Payne derived generalised quadrangle
arising in the usual way from an elation generalised quadrangle with elation group isomorphic to
$G$.}
\end{quote}
The authors state in \cite{ow} that perhaps `Heisenberg' could be replaced by `special' in the above
conjecture.  The groups $P$ for $q=3^f$ and $S_{U,W}$ for $q$ odd and not a prime are not Heisenberg
groups and so are counterexamples to Conjectures 2 and 3. The groups $S_{U,W}$ are not
special. Moreover, when $q$ is even, $P$ and $S_{U,W}$ are nonabelian 2-groups acting regularly on a
generalised quadrangle and so are further counterexamples.

The example of $C_{513}\rtimes C_9$ acting regularly on the points of $Q^-(5,8)$ is a particularly
interesting counterexample to Conjecture 3 as $Q^-(5,8)$ is not Payne derived and the group is not
nilpotent. It is however meta-abelian, but we saw in Example \ref{eg:35} that there are 4 
groups acting regularly on the generalised quadrangle of order $(3,5)$ that are not meta-abelian.

\section*{Acknowledgements}
The authors would like to thank Frank De Clerck, Bill Kantor, Tim Penttila, Gordon Royle and Pablo
Spiga for their comments on a preliminary draft, and the anonymous referees.  This work was
supported by an Australian Research Council Discovery Projects Grant (no.~DP0984540) and the second
author is also supported by an Australian Research Fellowship.

\end{document}